\theoremstyle{definition}
\newtheorem{definition}{Definition}
\newtheorem{example}{Example}
\theoremstyle{plain}
\newtheorem{theorem}{Theorem}
\newtheorem{lemma}{Lemma}
\newtheorem{proposition}{Proposition}
\newtheorem{corollary}{Corollary}
\long\def\ifnodedefined#1#2#3{%
	\@ifundefined{pgf@sh@ns@#1}{#3}{#2}%
}
\pgfplotsset{
	discontinuous/.style={
		scatter,
		scatter/@pre marker code/.code={
			\ifnodedefined{marker}{
				\pgfpointdiff{\pgfpointanchor{marker}{center}}%
				{\pgfpoint{0}{0}}%
				\ifdim\pgf@y>0pt
				\tikzset{options/.style={mark=*, fill=white}}
				\draw [densely dashed] (marker-|0,0) -- (0,0);
				\draw plot [mark=*] coordinates {(marker-|0,0)};
				\else
				\tikzset{options/.style={mark=none}}
				\fi
			}{
				\tikzset{options/.style={mark=none}}        
			}
			\coordinate (marker) at (0,0);
			\begin{scope}[options]
			},
			scatter/@post marker code/.code={\end{scope}}
	}
}
\newcommand{\RR}{\mathbb{R}}
\def\states{\mathcal X}
\def\cset{\mathcal M}
\def\csete{\mathcal C}
\def\allgambles{\RR^n}
\def\lpr{\underline P}
\def\upr{\overline P}
\def\gambleset{\mathcal F}
\newcommand{\low}[1]{{\underline{#1}}}
\newcommand{\up}[1]{{\overline{#1}}}
\newcommand{\ncone}[2]{N(#1, #2)}
\newcommand{\charf}[1]{{\mathbb I_{#1}}}
\newcommand{\setcol}[1]{\mathcal #1}
\newcommand{\cone}[1]{\mathrm{cone}(#1)}
\newcommand{\ri}[1]{\mathrm{ri}(#1)}
\title{A complete characterization of normal cones and extreme points for $p$-boxes}
\author[1]{Damjan \v{S}kulj \\ University of Ljubljana, Faculty of Social Sciences \\ Kardeljeva pl. 5, SI-1000 Ljubljana, Slovenia \\ \href{mailto:damjan.skulj@fdv.uni-lj.si}{\tt damjan.skulj@fdv.uni-lj.si} }
\begin{document}
	
	\maketitle

	\begin{abstract}
		Probability boxes, also known as $p$-boxes, correspond to sets of probability distributions bounded by a pair of distribution functions. They fall into the class of models known as imprecise probabilities. One of the central questions related to imprecise probabilities are the intervals of values corresponding to expectations of random variables, and especially the interval bounds. In general, those are attained in extremal points of credal sets, which denote convex sets of compatible probabilistic models. The aim of this paper is a characterization and identification of extreme points corresponding to $p$-boxes on finite domains. To accomplish this, we utilize the concept of normal cones. In the settings of imprecise probabilities, those correspond to sets of random variables whose extremal expectations are attained in a common extreme point. Our main results include a characterization all possible normal cones of $p$-boxes, their relation with extreme points, and the identification of adjacency structure on the collection of normal cones, closely related to the adjacency structure in the set of extreme points.  
		
		\smallskip\noindent
		{\bfseries Keywords.} normal cone,  credal set, convex polyhedron, extreme point, $p$-box  	
		
		\smallskip\noindent
		2020 Mathematics Subject Classification: 60A86, 52B11
	\end{abstract}

	\section{Introduction}
	Probability boxes or $p$-boxes \cite{ferson2003} for short, including generalized $p$-boxes \cite{destercke2008unifying}, are models belonging to a wider family of imprecise probabilistic models. Thus being capable of modelling situations that no single probabilistic model can adequately describe (see e.g. \cite{augustin2014introduction} and references therein for further sources on general models of imprecise probabilities). A $p$-box is given as a pair of distribution functions $(\low F, \up F)$ giving rise to a set of distribution functions $F$ such that $\low F\le F \le \up F$, usually interpreted as the set of models compatible with the available information. This set is also called a \emph{credal set} and is denoted by $\cset(\low F, \up F)$. $p$-boxes can be related to other models of imprecise probability, such as possibility and belief functions \cite{montes2017extreme, montes2018extreme,  troffaes2013connection}.  Their applications include engineering \cite{faes2021engineering}, modelling of risk and uncertainty \cite{rohmer2021targeted}, reliability \cite{schobi2017structural}. Recently, bivariate and multivariate generalizations have been proposed and related to imprecise copulas \cite{dolvzan2022some, liu2021copula, montes2015, omladivc2020constructing, omladivc2020final, omladivc2020full,   pelessoni2016}.
	
	In the present paper we analyze the set $\cset(\low F, \up F)$ from the viewpoint of convex analysis. We restrict to $p$-boxes on finite domains $\states\subset \RR$, whose credal sets have finite number of extreme points, making them convex polyhedra. Our main focus is a complete and systematic characterization of their extreme points. The analysis is based on the aspects of normal cones that have previously proved useful in the analysis of lower probabilities with special emphasis on probability intervals and 2-monotone lower probabilities in \cite{skulj2022normal}. Since $p$-boxes too can be considered as a  special case of lower probabilities, several general results developed there will also prove useful for the present analysis.  Convex analysis via normal cones has been utilized in some other areas of imprecise probabilities, such as in error estimation for lower previsions \cite{vskulj2019errors} and computational methods for imprecise stochastic processes \cite{skulj2020computing, skulj20uqop}. 
	
	Identification of extreme points is important, because every extreme point is a solution of the optimization problem of finding the minimal or maximal expectation of a real-valued function $h\colon \states\to \RR$ with respect to the credal set. The idea of characterization of extreme points in $p$-boxes is not entirely new. Two contributions, \citet{montes2017extreme} and \citet{montes2018extreme} address the same question by the methods of belief and possibility functions. Their particular interest is in estimating the number of extreme points and finding $p$-boxes where the maximal number is attained. A related problem was also addressed by \citet{utkin2009computing}, where the optimization problem of calculating extremal expectations with respect $p$-boxes is analyzed. Extreme points for related models of imprecise probabilities have also been analyzed in \cite{dempster2008upper} for the case of belief functions; \cite{WALLNER2007339} for general lower probabilities; \cite{miranda2003extreme} for 2-alternating capacities; and \cite{miranda2006extreme} for some other families of non-additive measures. 
	
	The approach taken in this paper is in many ways different from those proposed previously, illuminating the problem from another perspective. In the first place, we propose a systematic characterization of all possible simplicial normal cones corresponding to extreme points of $p$-boxes. These are then related to the corresponding extreme points given in terms of extremal distribution functions. Their ranges are also analyzed in terms of the ranges of their lower and upper bounds. The normal cones, and consequently the corresponding extreme points are further investigated from the adjacency point of view. In our settings, two normal cones are adjacent if they meet in common face of codimension 1, or equivalently, if they are generated by sets of generators that differ in only one element. The adjacency between normal cones is shown to be closely related to adjacency between extreme points.
	
	The paper is structured as follows. In the subsequent section the concepts of convex polyhedra and their normal cones and their applications to the models of lower probabilities. In Section~\ref{s-ncpb}, $p$-boxes are introduced and their normal cones are characterized and classified. Our main results are collected in Sections~\ref{s-ncpb} and \ref{s-eppb}. An analysis of extreme points of $p$-boxes is given in Section~\ref{s-ncpb}. In Section~\ref{s-eppb}, some new results about extreme points are proposed, describing their ranges. Next, the extreme points are related to the normal cones. Finally, the adjacency structure on the collection of normal cones is analyzed.  
	
	\section{Normal cones}\label{s-nc}
	\subsection{Normal cones and normal fans of convex polyhedra}	
	The main focus of this paper is in normal cones of convex polyhedra generated by $p$-boxes as their credal sets. In a related case of lower probabilities, a similar analysis was conducted in \cite{skulj2022normal}, where some fundamental results on convex polyhedra and their triangulations were proposed. Our analysis here will also be based on these general results, that we now list in a compact form. 
	
	A \emph{convex polyhedron} in $\allgambles$ is a bounded convex set $\csete$ with finitely many extreme points, or equivalently, an intersection of a finite number of half spaces of the form $\{ x\in \allgambles \colon x f \geqslant b_f \}$, where $f\in \allgambles$ is a vector and $b_f$ a constant. We can thus denote it as
	\begin{equation}\label{eq-convex-polyhedron}
		\csete = \{ x\in \allgambles \colon x f \geqslant b_f \text{ for all } f \in \gambleset \},
	\end{equation}
	where $\gambleset$ is a given finite collection of vectors and $\{ b_f\colon f\in \mathcal F\}$ a corresponding collection of constants. Some of the inequalities $x f\geqslant b_f$ may in fact be equalities. This case, however, can be unified with the general case by replacing an equality condition $x f = b_f$ with two inequalities, $x f \geqslant b_f$ and $x (-f) \geqslant -b_f$. This will allow us to use the simple description \eqref{eq-convex-polyhedron} throughout the text. The set $\csete'\subseteq \csete$ obtained by turning one or more inequalities $xf\ge b_f$ in \eqref{eq-convex-polyhedron} into an equality $xf = b_f$ is then a \emph{face} of $\csete$. The set of elements of $\csete$ that are not contained in any proper face is called \emph{relative interior} of $\csete$ and denoted by $\ri{\csete}$. 
	
	A convex set $C$ is called a \emph{(convex) cone} if it is closed for non-negative scalar multiplications. That is $x\in C$ implies $\alpha x\in C$ for every $\alpha\ge 0$. Often cones are generated as non-negative linear combinations of finite sets of vectors $\mathcal G$, denoted by $\cone{\mathcal G}$. Thus, 
	\begin{equation}\label{eq-cone-generated}
		\cone{\mathcal G} = \left\{ \sum_{f\in \mathcal G} \alpha_f f \colon \alpha_f \ge 0 \text{ for all } f\in \mathcal G \right\}. 
	\end{equation}
	If a cone is of the form \eqref{eq-cone-generated}, we will say that it is \emph{generated} by $\mathcal G$. 
	A cone consists of its faces and relative interior which is equal to the set of all strictly positive linear combinations of elements in $\mathcal G$:
	\begin{equation}
		\ri{\cone{\mathcal G}} = \left\{ \sum_{f\in \mathcal G} \alpha_f f \colon \alpha_f > 0 \text{ for all } f\in \mathcal G \right\}. 
	\end{equation} 
	For some point $x\in \csete$, where $\csete$ is a convex set in $\allgambles$, we define its \emph{normal cone} to be the set 
	\begin{equation}\label{eq-ncone}
		\ncone{\csete}{x} = \{ f\in \allgambles \colon x f \leqslant y f \text{ for every } y\in \csete \}.
	\end{equation}
	That is, the normal cone of $x$ is the set of all vectors $f$ for which $x = \arg\min_{y\in\csete}y f$. The following proposition will be useful in the sequel. 
	\begin{proposition}[(see \cite{gruber:07CDG}, Proposition 14.1)]
		Let $\csete$ be a convex polyhedron of the form  \eqref{eq-convex-polyhedron} and $\ncone{\csete}{x}$ its normal cone in $x$. The following propositions hold. 
		\begin{enumerate}[(i)]
			\item $\ncone{\csete}{x} = \cone{\ncone{\csete}{x}\cap \gambleset}$.
			\item If $x$ is an extreme point, then $\dim(\ncone{\csete}{x}) = \dim \allgambles$. 
		\end{enumerate}		
	\end{proposition}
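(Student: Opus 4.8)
The plan is to reduce both parts to a single structural identity: the normal cone is generated precisely by the constraint normals that are \emph{active} at $x$. Write $\gambleset_x := \{ f\in\gambleset : xf = b_f\}$ for the set of active constraints at $x$. I would first establish
\[
  \ncone{\csete}{x} = \cone{\gambleset_x}.
\]
The inclusion $\supseteq$ is a direct computation: if $f = \sum_{g\in\gambleset_x}\alpha_g g$ with all $\alpha_g\ge 0$, then for any $y\in\csete$ we have $yg \ge b_g = xg$ for each active $g$, whence $yf = \sum_g \alpha_g\, yg \ge \sum_g \alpha_g\, xg = xf$; thus $f\in\ncone{\csete}{x}$. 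In particular each $g\in\gambleset_x$ itself lies in $\ncone{\csete}{x}$, so $\gambleset_x \subseteq \ncone{\csete}{x}\cap\gambleset$.

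For the reverse inclusion, take $f\in\ncone{\csete}{x}$, i.e. $x$ minimizes $y\mapsto yf$ over $\csete$. The cone of feasible directions at $x$ is $D = \{ d : gd \ge 0 \text{ for all } g\in\gambleset_x\}$, and optimality of $x$ forces $fd \ge 0$ for every $d\in D$. Hence $f$ lies in the polar cone of $D$, and by the Farkas--Minkowski--Weyl duality for polyhedral cones the polar of $D$ is exactly $\cone{\gambleset_x}$. This gives $f\in\cone{\gambleset_x}$ and completes the identity. I expect this polar-duality step to be the main obstacle, as it is the only place a nontrivial theorem (Farkas' lemma) is required; everything else is elementary, and note the identity holds for arbitrary $x\in\csete$, not only extreme points.

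Part (i) then follows by sandwiching. From $\gambleset_x \subseteq \ncone{\csete}{x}\cap\gambleset \subseteq \ncone{\csete}{x}$, using that $\ncone{\csete}{x}$ is a convex cone and that all sets involved are finite, passing to conical hulls yields
\[
  \cone{\gambleset_x} \subseteq \cone{\ncone{\csete}{x}\cap\gambleset} \subseteq \ncone{\csete}{x} = \cone{\gambleset_x}.
\]
Thus all three coincide, giving $\ncone{\csete}{x} = \cone{\ncone{\csete}{x}\cap\gambleset}$.

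For part (ii) I would invoke the standard vertex characterization: if $x$ is an extreme point, then $\gambleset_x$ spans $\allgambles$. I argue this by contraposition — if $\mathrm{span}(\gambleset_x)\ne\allgambles$, pick $0\ne d$ orthogonal to every $g\in\gambleset_x$; then $x\pm\varepsilon d$ keep all active constraints tight and, for small $\varepsilon>0$, still satisfy the strict inactive ones, so both lie in $\csete$ and display $x$ as a proper convex combination, contradicting extremality. Consequently $\gambleset_x$ contains $n$ linearly independent vectors, so $\cone{\gambleset_x}$ contains these together with the origin and its linear span — equivalently its affine hull — is all of $\allgambles$. By the identity of part (i), $\dim(\ncone{\csete}{x}) = \dim(\cone{\gambleset_x}) = n = \dim\allgambles$.
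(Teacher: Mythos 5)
Your argument is correct, but note that the paper itself offers no proof of this proposition: it is imported verbatim from Gruber's \emph{Convex and Discrete Geometry} (Proposition 14.1), so there is no in-paper argument to compare against. What you give is the standard self-contained derivation: you prove the sharper identity $\ncone{\csete}{x}=\cone{\gambleset_x}$ for the active set $\gambleset_x=\{f\in\gambleset: xf=b_f\}$, with the only nontrivial ingredient being the Farkas--Minkowski--Weyl duality identifying the dual cone of the feasible-direction cone $D=\{d: gd\ge 0\ \forall g\in\gambleset_x\}$ with $\cone{\gambleset_x}$; part (i) then follows by the sandwich $\cone{\gambleset_x}\subseteq\cone{\ncone{\csete}{x}\cap\gambleset}\subseteq\ncone{\csete}{x}$ (the middle set can genuinely be larger than $\cone{\gambleset_x}$ when $\gambleset$ contains redundant constraints, so the sandwich is the right way to phrase it), and part (ii) follows from the vertex characterization that $\gambleset_x$ spans $\allgambles$ at an extreme point. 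Two cosmetic points: what you call the ``polar'' of $D$ is, with the paper's sign conventions, the \emph{dual} cone $\{f: fd\ge 0\ \forall d\in D\}$ rather than the polar $\{f: fd\le 0\ \forall d\in D\}$, though the argument is unaffected; and the step ``optimality of $x$ forces $fd\ge 0$ for every $d\in D$'' silently uses that for a polyhedron every $d\in D$ is an actual feasible direction (inactive constraints stay satisfied for small steps) --- worth one sentence, but standard. Your identity $\ncone{\csete}{x}=\cone{\gambleset_x}$ is in fact slightly stronger than item (i) as stated and is the form most useful elsewhere in the paper (e.g.\ Proposition~\ref{prop-cone-realization}).
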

	The set of all normal cones of a polyhedron forms a \emph{normal fan}, which has the property that together with cones also contains their faces and that the union of its members is the entire space $\allgambles$, which makes it a \emph{complete fan}.

	In the case where $\mathcal G$ is linearly independent set, $\cone{\mathcal G}$ is called a \emph{simplicial cone}, as the elements of $\mathcal G$ form a simplex. Simplicial cones play a central role in this paper. A general normal cone can be subdivided into simplicial cones by means of triangulations. It can be shown (see \cite{skulj2022normal}) that every normal fan can be triangulated so that it only contains simplicial cones. Such triangulation then induces a triangulation of every included normal cone into a union of simplicial cones. In \cite{skulj2022normal}, such triangulated fans are called \emph{complete normal simplicial fans}. A complete normal simplicial fan is therefore a collection of simplicial cones that together form the entire space $\allgambles$, and can be treated separately from the corresponding convex set. It is even possible that different convex sets induce the same normal fan. Moreover, the triangulations keep the property that every cone $C$ in the triangulation satisfies $C=\cone{C\cap \gambleset}$. That is, every cone is generated by gambles in $\gambleset$. 	
	The following proposition characterizes the elements of the triangulated fan. 
	\begin{proposition}\label{prop-full-triangulation}
		Let $\cone{\mathcal G}$ be an element of a complete normal simplicial fan, where $\mathcal G\subset\gambleset$. Then 
		\begin{enumerate}[(i)]
			\item $\mathcal G$ is linearly independent;
			\item for every $f\in \gambleset\backslash \mathcal G$ we have that $f\not\in \cone{\mathcal G}$;
			\item if $|\mathcal G| = \dim \allgambles$, then a convex set $\csete$ exists such that $\cone{\mathcal G}$ is its normal cone in an extreme point;
			\item a convex set $\csete$ exists such that $\cone{\mathcal G}$ is its normal cone.  
		\end{enumerate}
	\end{proposition}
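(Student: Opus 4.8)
The plan is to treat the two structural claims (i)--(ii) and the two existence claims (iii)--(iv) separately, since they are of a completely different nature: the first pair is read off from the fan axioms, whereas the second pair is established by exhibiting an explicit polyhedron.

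For (i) and (ii) I would work from three facts about a complete normal simplicial fan: its cones are simplicial, every cone $C$ satisfies $C=\cone{C\cap\gambleset}$, and (being a fan) it is closed under taking faces with pairwise intersections equal to common faces; I would also use the standard fact that the relative interiors of the cones of a complete fan partition $\allgambles$. The key preliminary observation is that the one-dimensional cones of the fan are exactly $\{\RRpo f : f\in\gambleset\}$: the generation property forces every ray to contain an element of $\gambleset$, while the fan refines the normal fan, whose rays are the facet-normal rays, so (by \cite{skulj2022normal}) no ray is lost and none is created outside $\gambleset$. Granting this, I would prove (ii) by contradiction. If $f\in\gambleset\cap\cone{\mathcal G}$, let $F$ be the unique face of $\cone{\mathcal G}$ whose relative interior contains $f$. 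Then $F$ is itself a cone of the fan, and since $f\in\gambleset$ the ray $\RRpo f$ is a cone of the fan contained in $F$; as the intersection of two fan cones is a common face, $\RRpo f$ must be a face, i.e.\ an extreme ray, of $F$. But a point lying simultaneously in the relative interior of $F$ and on an extreme ray of $F$ forces $F=\RRpo f$, so $f$ is a generator, i.e.\ $f\in\mathcal G$; the contrapositive is exactly (ii). Claim (i) then follows at once: the argument shows $\cone{\mathcal G}\cap\gambleset=\mathcal G$, with each of these vectors lying on a distinct extreme ray of the simplicial cone $\cone{\mathcal G}$, and the extreme rays of a simplicial cone are linearly independent.

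For (iii) I would realise $\cone{\mathcal G}$ as a genuine normal cone by a dual-cone construction. With $\mathcal G=\{g_1,\dots,g_n\}$ a basis of $\allgambles$ (using (i) together with $|\mathcal G|=n$), set $K=\{y : yg_i\geqslant 0,\ i=1,\dots,n\}$, a full-dimensional pointed simplicial cone with apex $0$ whose dual is precisely $\cone{\mathcal G}$. To obtain a bounded set I would cut $K$ with a halfspace $\{y : yw\leqslant c\}$ for $w=\sum_i g_i$ and $c>0$; since $w$ lies in the interior of the dual cone, every nonzero recession direction of $K$ has strictly positive pairing with $w$, so $\csete=K\cap\{y: yw\leqslant c\}$ is bounded. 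At the vertex $x=0$ the cutting constraint is slack, so the only tight constraints are $yg_i\geqslant 0$, whence $\ncone{\csete}{0}=\cone{g_1,\dots,g_n}=\cone{\mathcal G}$; this cone is $n$-dimensional, consistent with $0$ being an extreme point.

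For (iv), where now $|\mathcal G|=k\leqslant n$, I would reuse the same idea but place the witnessing point on a face of dimension $n-k$. Taking $K=\{y : yg_i\geqslant 0,\ i=1,\dots,k\}$, the subspace $\{y: yg_i=0\ \forall i\}$ is its lineality space, and at every relative interior point of it all $k$ constraints are tight, giving normal cone $\cone{\mathcal G}$. After intersecting $K$ with finitely many halfspaces $\{y:yw_j\leqslant c_j\}$ with $c_j>0$ chosen so that the result is bounded and no new constraint is activated at $x=0$, the set $\csete$ is a convex polyhedron with $\ncone{\csete}{0}=\cone{\mathcal G}$, and $0$ fails to be an extreme point precisely when $k<n$. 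I expect the main obstacle to be the structural claim (ii): the delicate point is the ray statement $\{\RRpo f:f\in\gambleset\}$ and the careful bookkeeping with the fan axioms, since without it an element of $\gambleset$ could in principle sit in the relative interior of $\cone{\mathcal G}$; by contrast the constructions in (iii) and (iv) are routine once the correct bounding vector $w$ is identified.
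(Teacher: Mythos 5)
The paper does not actually prove this proposition: it is recalled from \cite{skulj2022normal} as background material, so there is no in-paper argument to compare yours against line by line. Taken on its own terms, your treatment of (iii) and (iv) is correct and self-contained: realizing $\cone{\mathcal G}$ as the normal cone at the apex of the predual simplicial cone $K=\{y\colon yg_i\geqslant 0\}$, truncated by $yw\leqslant c$ with $w=\sum_i g_i$ (which is strictly positive on $K\setminus\{0\}$ precisely because $\mathcal G$ is a basis), is the standard construction, and the lineality-space variant with additional bounding halfspaces handles the case $|\mathcal G|<\dim\allgambles$ for (iv).

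The weak point is (i)--(ii). Your argument hinges on the claim that the rays of the fan are exactly $\{\RRpo f\colon f\in\gambleset\}$, and in particular that \emph{every} $f\in\gambleset$ spans a ray of the triangulated fan. The easy direction (every one-dimensional cone of the fan is spanned by some $f\in\gambleset$) does follow from $C=\cone{C\cap\gambleset}$, but the direction you actually use does not follow from ``the fan refines the normal fan'': a redundant element $f\in\gambleset$ is not a ray of the untriangulated normal fan, and a refinement need not introduce it as a ray. That every element of $\gambleset$ does appear as a ray is a property of the specific triangulation constructed in \cite{skulj2022normal} (all elements of $\gambleset$ are used as vertices), and it is essentially equivalent to the conjunction of (i) and (ii): if some $f\in\gambleset$ lay in the relative interior of a full-dimensional cone $C$ of the fan, then $C\cap\gambleset$ would already be linearly dependent. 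So your derivation of (i)--(ii) is circular unless you import that construction-specific fact --- which is, in effect, what the paper itself does by citing the result without proof. A minor further quibble: from $F=\RRpo f$ you conclude $f\in\mathcal G$, which fails if $\gambleset$ contains two distinct positive multiples of the same vector; that degenerate case should be excluded by convention.
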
 
	\begin{definition}
		Let $\mathcal G\subseteq \gambleset$ be a basis of $\allgambles$, i.e. linearly independent with $|\mathcal G| = \dim \allgambles$. A cone of the form $\cone{\mathcal G}$, such that $f\not\in\cone{\mathcal G}$ for every $f\in\gambleset\backslash \mathcal G$, is called a \emph{maximal elementary simplicial cone (MESC)}. 
	\end{definition}
	In the sequel we focus on the set of possible MESCs. That is, we will analyze the properties of MESCs without reference to a specific convex polyhedra inducing them. Proposition~\ref{prop-full-triangulation} ensures that every possible MESC can be a part of triangulated normal fan.	Yet, not every collection of them can fit together. A useful tool for the analysis of possible configurations of extreme points in polyhedra is the endowment of a graph structure to the set of its extreme points. In the next section we build on this idea to generate a graph on the set of all MESCs, which can reveal some structural properties of normal fans and especially their triangulated forms.  
	
	\subsection{Adjacent cones and graph theoretical properties of maximal elementary simplicial cones}\label{ss-acgt}
	A convex polyhedron $\csete$ can be endowed a graph structure, where the extreme points are taken as vertices. An edge between two vertices is then a one dimensional face of $\csete$ with the given vertices as extreme points. The graph structure can be extended to the set of MESCs that are related to the extreme points in the sense described by Proposition~\ref{prop-full-triangulation}. 
	Here we list the most important results from \cite{skulj2022normal}. 
	\begin{definition}
		Let $\mathcal G$ and $\mathcal G'$ be two subsets of $\gambleset$, so that the corresponding cones $\cone{\mathcal G}$ and $\cone{\mathcal G'}$ are MESCs. Then the cones are said to be adjacent if they intersect in a common face of codimension 1.  
	\end{definition}
	The converse of the above corollary is not true, though. The following lemma gives the necessary and sufficient conditions for adjacency. 
	\begin{lemma}\label{lem-adjacent-cone}
		Let cones $\cone{\mathcal G}$ and $\cone{\mathcal G'}$ be MESCs such that $|\mathcal G\cap \mathcal G'| = |\mathcal G|-1 = |\mathcal G'|-1$. Let $\mathcal H$ be the hyperplane generated by $\mathcal G\cap \mathcal G'$ and $t$ its normal (non-zero) vector. Further let $f\in \mathcal G\backslash \mathcal G'$ and $f'\in \mathcal G'\backslash \mathcal G$. Then $\cone{\mathcal G}$ and $\cone{\mathcal G'}$ are adjacent if and only if $(f\cdot t)(f'\cdot t) < 0$, i.e. the scalar products with the normal vector have opposite signs.  
	\end{lemma}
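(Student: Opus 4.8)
The plan is to read adjacency off the position of the two ``odd-one-out'' generators $f$ and $f'$ relative to the hyperplane $\mathcal{H}$. Since $\mathcal{G} = (\mathcal{G}\cap\mathcal{G}')\cup\{f\}$ is linearly independent and $\mathcal{H} = \operatorname{span}(\mathcal{G}\cap\mathcal{G}')$, the vector $f$ cannot lie in $\mathcal{H}$, so $f\cdot t\neq 0$; the same argument gives $f'\cdot t\neq 0$. Hence the product $(f\cdot t)(f'\cdot t)$ is never zero, and the two cases ``opposite signs'' and ``same sign'' are exhaustive, so establishing each implication separately yields the biconditional. Throughout I orient $t$ so that $f\cdot t>0$.

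First I would record the elementary half-space description of a simplicial cone. Writing a generic element of $\cone{\mathcal{G}}$ as $\sum_{g\in\mathcal{G}\cap\mathcal{G}'}\alpha_g g + \beta f$ with all coefficients nonnegative and pairing with $t$, every $g\in\mathcal{G}\cap\mathcal{G}'$ is annihilated, so $x\cdot t = \beta(f\cdot t)\geqslant 0$. Thus $\cone{\mathcal{G}}\subseteq\{x\colon x\cdot t\geqslant 0\}$, and the subset on which $x\cdot t = 0$ is exactly $\{\beta=0\}=\cone{\mathcal{G}\cap\mathcal{G}'}$, the facet obtained by dropping $f$. Because $\mathcal{G}\cap\mathcal{G}'$ is linearly independent of cardinality $\dim\allgambles - 1$, this facet has dimension $\dim\allgambles - 1$, i.e.\ codimension $1$; the same reasoning shows it is likewise the facet of $\cone{\mathcal{G}'}$ obtained by dropping $f'$, so $\cone{\mathcal{G}\cap\mathcal{G}'}$ is a common face of both cones.

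For the implication ``opposite signs $\Rightarrow$ adjacent'', suppose $f'\cdot t<0$. The identical computation gives $\cone{\mathcal{G}'}\subseteq\{x\colon x\cdot t\leqslant 0\}$, so any point of $\cone{\mathcal{G}}\cap\cone{\mathcal{G}'}$ satisfies $x\cdot t=0$ and therefore lies in $\cone{\mathcal{G}\cap\mathcal{G}'}$; the reverse inclusion is trivial. Hence the intersection equals the common codimension-$1$ face $\cone{\mathcal{G}\cap\mathcal{G}'}$ and the cones are adjacent.

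The remaining, more delicate direction is ``same sign $\Rightarrow$ not adjacent'', which I would prove contrapositively by exhibiting a point interior to both cones, forcing the intersection to be full-dimensional and hence not a face of codimension $1$. Expanding $f' = \sum_g \lambda_g g + \mu f$ in the basis $\mathcal{G}$ and pairing with $t$ gives $f'\cdot t = \mu(f\cdot t)$, so the same-sign hypothesis yields $\mu>0$. Taking a point $x=\sum_g\alpha_g g+\beta' f'$ in the relative interior of $\cone{\mathcal{G}'}$ (all $\alpha_g,\beta'>0$) and rewriting it as $x=\sum_g(\alpha_g+\beta'\lambda_g)g+\beta'\mu f$, one chooses $\beta'>0$ small and each $\alpha_g$ large enough that $\alpha_g+\beta'\lambda_g>0$, making every coefficient in both representations strictly positive and placing $x$ in the interior of each cone. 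The main obstacle is precisely this construction: one must check that the finitely many constraints $\alpha_g+\beta'\lambda_g>0$ can be satisfied simultaneously while keeping $x$ genuinely interior to $\cone{\mathcal{G}'}$, and argue that a full-dimensional intersection rules out a codimension-$1$ common face. Combining the two implications with the exhaustiveness of the sign cases gives the stated equivalence.
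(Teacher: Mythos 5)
Your proof is correct, but there is nothing in the paper to compare it against: Lemma~\ref{lem-adjacent-cone} is one of the results the paper imports verbatim from \cite{skulj2022normal} ("Here we list the most important results from...") and is stated without proof. Your self-contained argument via the half-space description of simplicial cones is sound. The preliminary observation that $f\cdot t\neq 0$ and $f'\cdot t\neq 0$ (since each of $\mathcal G$, $\mathcal G'$ is a basis) makes the sign dichotomy exhaustive; in the opposite-sign case the two cones sit in complementary half-spaces, which traps the intersection inside $\mathcal H$ and identifies it with the common facet $\cone{\mathcal G\cap\mathcal G'}$; in the same-sign case the change of basis $f'=\sum_g\lambda_g g+\mu f$ with $\mu>0$ yields a point interior to both cones, so the intersection is full-dimensional and cannot equal a codimension-$1$ face. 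The step you flag as the main obstacle is in fact routine: fix any $\beta'>0$ and set $\alpha_g=1+\beta'|\lambda_g|$, which keeps all coefficients in both (unique, since each generating set is a basis) representations strictly positive. One point worth making explicit is the reading of ``intersect in a common face of codimension $1$'': since $\cone{\mathcal G\cap\mathcal G'}$ is a facet of both cones regardless of the signs, the definition must mean that the \emph{intersection equals} such a face, which is the interpretation your two directions correctly use.
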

	\begin{corollary}\label{cor-adjacent-extreme-points}
		Let $\csete$ be a polyhedron and $\mathcal T$ its complete normal simplicial fan. Let $C, C'\in \mathcal T$ be a pair of adjacent MESCs. Then exactly one of the following holds:
		\begin{enumerate}[(i)]
			\item An extreme point $x\in \csete$ exists such that $C, C'\subseteq \ncone{\csete}{x}$.
			\item A pair of extreme points $x, x'$ exists that lie in a common 1-dimensional face of $\mathcal T$ such that $C\subseteq \ncone{\csete}{x}$ and $C'\subseteq \ncone{\csete}{x'}$. 
		\end{enumerate} 
	\end{corollary}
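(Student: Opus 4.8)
The plan is to exploit the fact that the complete normal simplicial fan $\mathcal T$ is a refinement of the ordinary normal fan of $\csete$, so that every MESC sits inside the normal cone of a single extreme point. First I would note that $C$ is a full-dimensional simplicial cone of the triangulation $\mathcal T$, and that the maximal cones of the normal fan of $\csete$ are exactly the normal cones $\ncone{\csete}{x}$ at the extreme points $x$; these are the only $n$-dimensional cones of the fan by part (ii) of the cited Proposition, where $n=\dim\allgambles$. Since $\mathcal T$ refines the normal fan, each of its cones lies inside some cone of the fan, and a full-dimensional cone can only lie inside a full-dimensional one. Hence the relative interior of $C$ meets the relative interior of exactly one maximal normal cone, giving a \emph{unique} extreme point $x$ with $C\subseteq \ncone{\csete}{x}$, and likewise a unique $x'$ with $C'\subseteq \ncone{\csete}{x'}$.

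Next I would split into the two cases $x=x'$ and $x\neq x'$. If $x=x'$, then immediately $C,C'\subseteq \ncone{\csete}{x}$ and alternative (i) holds. The uniqueness established above is what yields mutual exclusivity: were (i) and (ii) to hold simultaneously, the extreme point of (i) and the two (distinct) extreme points of (ii) would all have to coincide with the unique points associated to $C$ and $C'$, contradicting the distinctness required in (ii). Thus at most one alternative holds, and it remains only to treat $x\neq x'$ and derive (ii).

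In the substantive case $x\neq x'$, I would work with the shared codimension-$1$ face $\mathcal F=C\cap C'$, which has dimension $n-1$ by adjacency. Because $\mathcal T$ refines the normal fan, with $C\subseteq \ncone{\csete}{x}$ and $C'\subseteq \ncone{\csete}{x'}$, the face $\mathcal F$ is contained in $\ncone{\csete}{x}\cap \ncone{\csete}{x'}$. The latter is a common face of two distinct maximal cones of the complete normal fan, hence a proper face of each and of dimension at most $n-1$; as it contains the $(n-1)$-dimensional set $\mathcal F$, its dimension is exactly $n-1$. By the face--cone duality of the normal fan, an $(n-1)$-dimensional cone of the fan is the normal cone of a $1$-dimensional face of $\csete$, and the two extreme points whose maximal normal cones meet along it are precisely the endpoints of that edge. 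Therefore $x$ and $x'$ lie in a common $1$-dimensional face with $C\subseteq \ncone{\csete}{x}$ and $C'\subseteq \ncone{\csete}{x'}$, which is alternative (ii).

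I expect the main obstacle to be the careful bookkeeping in the case $x\neq x'$: justifying that the shared facet of the two MESCs actually lies in the common facet of the two vertex normal cones, and then transporting the resulting dimension count through the face--cone duality to recover the edge joining $x$ and $x'$. The containment $\mathcal F\subseteq \ncone{\csete}{x}\cap\ncone{\csete}{x'}$ together with the squeeze that forces this intersection to be $(n-1)$-dimensional is the crux; once that is in hand, the standard correspondence between $(n-1)$-dimensional normal cones and edges of $\csete$ completes the argument.
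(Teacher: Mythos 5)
Your argument is correct, but there is nothing in this paper to compare it against: Corollary~\ref{cor-adjacent-extreme-points} is stated here as one of the results imported verbatim from \cite{skulj2022normal}, and no proof is given in the present manuscript. Taken on its own terms, your proof is the standard and essentially inevitable one: the complete normal simplicial fan refines the normal fan, so each full-dimensional cone of $\mathcal T$ sits inside the normal cone of a unique extreme point (uniqueness because two distinct maximal normal cones meet only in a proper, hence lower-dimensional, common face); the dichotomy $x=x'$ versus $x\neq x'$ then delivers alternatives (i) and (ii), and the squeeze $\mathcal F = C\cap C'\subseteq \ncone{\csete}{x}\cap\ncone{\csete}{x'}$ forces the latter intersection to be $(n-1)$-dimensional, hence the normal cone of the smallest face containing both $x$ and $x'$, which must be an edge with endpoints $x$ and $x'$. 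Two small points worth making explicit if this were written out in full: first, the cited Proposition only gives that vertex normal cones are $n$-dimensional, not that they are the \emph{only} $n$-dimensional cones of the fan, so you should invoke the standard complementary-dimension relation between faces of $\csete$ and their normal cones; second, the statement's phrase ``1-dimensional face of $\mathcal T$'' is evidently a slip for a 1-dimensional face of $\csete$, which is what your argument (correctly) produces. Neither affects the validity of the proof.
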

	Every pair of adjacent MESCs corresponds to either a pair of adjacent extreme points or the same extreme point, which means that the two cones belong to the triangulation of its normal cone. 
	
	With $\mathbb{F}$ we now denote the set of all MESCs. This set represents all possible maximal simplicial cones of polyhedra in the form \eqref{eq-convex-polyhedron}. Let $G = (\mathbb{F}, \mathbb{E})$ be the graph with the set of vertices $\mathbb{F}$ and the set of edges $\mathbb{E} = \{ (C, C')\colon C, C'\in \mathbb{F}, C \text{ and } C' \text{ are adjacent}\}$.

	Every MESC $\cone{\mathcal G}$ defines a unique vector $x$ satisfying $xf = b_f$ for every $f\in \mathcal G$. Not every such vector however is an extreme point in $\csete$. A characterization is given by the following proposition. 
	\begin{proposition}\label{prop-cone-realization}
		A MESC $\cone{\mathcal G}$ lies within the normal cone of an extreme point $x\in \csete$ of the form \eqref{eq-convex-polyhedron} if and only if the following conditions are satisfied:
		\begin{enumerate}[(i)]
			\item $xf = b_f$ for every $f\in \mathcal G$;
			\item $xf \ge b_f$ for every $f\in \mathcal F$.
		\end{enumerate}
	\end{proposition}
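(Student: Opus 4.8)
The plan is to read the statement with $x$ denoting the unique vector determined by $\mathcal G$ through the equalities in condition~(i), exactly as in the remark preceding the proposition. Since $\cone{\mathcal G}$ is a MESC, Proposition~\ref{prop-full-triangulation}(i) guarantees that $\mathcal G$ is linearly independent, and $|\mathcal G| = \dim\allgambles$, so the system $\{xf = b_f : f\in\mathcal G\}$ has a unique solution $x$. Under this reading condition~(i) holds by construction, and the substance of the equivalence is that $x$ is an extreme point whose normal cone contains $\cone{\mathcal G}$ if and only if $x\in\csete$, that is, condition~(ii). I would make this interpretation explicit at the very start, since it streamlines both directions.

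For necessity I would argue directly: if $\cone{\mathcal G}$ lies within $\ncone{\csete}{x}$ for the extreme point $x$, then in particular $x\in\csete$, so $xf\ge b_f$ for every $f\in\gambleset$ by \eqref{eq-convex-polyhedron}, which is precisely condition~(ii), while condition~(i) is the defining property of $x$. The real work is in the sufficiency direction.

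For sufficiency, assume (i) and (ii). First I would show $\cone{\mathcal G}\subseteq\ncone{\csete}{x}$ by checking the defining inequality \eqref{eq-ncone} on the generators: for each $f\in\mathcal G$ and every $y\in\csete$ we have $yf\ge b_f = xf$, the first inequality because $y\in\csete$ and the second by~(i); hence $f\in\ncone{\csete}{x}$, and since the normal cone is a convex cone containing every generator of $\cone{\mathcal G}$, it contains $\cone{\mathcal G}$. Second, I would show that $x$ is an extreme point: by~(ii) we have $x\in\csete$, and by~(i) the $|\mathcal G| = \dim\allgambles$ constraints indexed by $\mathcal G$ are active at $x$ and linearly independent (Proposition~\ref{prop-full-triangulation}(i)); since the face cut out by these equalities is the single point $x$ (the linear system has a unique solution), $x$ is a zero-dimensional face, i.e. an extreme point of $\csete$.

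The main subtlety to get right is the identification of the point $x$. Under the chosen reading the necessity direction is transparent, but if one instead started from an arbitrary extreme point $x'$ with $\cone{\mathcal G}\subseteq\ncone{\csete}{x'}$, then recovering $x'f = b_f$ for $f\in\mathcal G$ would require that each bound $b_f$ be attained on $\csete$ (that the constraints be supporting rather than redundant), which one reads off from the active-set description $\ncone{\csete}{x'} = \cone{\ncone{\csete}{x'}\cap\gambleset}$ of the cited Proposition~14.1 together with Proposition~\ref{prop-full-triangulation}(i). I would therefore fix $x$ as the vector of~(i) at the outset and keep the polyhedral vertex characterization (that $\dim\allgambles$ linearly independent active constraints force an extreme point) as the only nontrivial ingredient.
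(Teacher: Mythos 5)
The paper itself gives no proof of this proposition: it appears in the block of results imported from \cite{skulj2022normal}, so there is no in-paper argument to compare yours against. Judged on its own terms, your proof is correct and complete. The reading you fix at the outset --- that $x$ is the unique solution of $xf=b_f$, $f\in\mathcal G$, which exists because $\mathcal G$ is a basis of $\allgambles$ by the definition of a MESC --- is the one forced by the sentence immediately preceding the proposition, and it makes (i) tautological and reduces the claim to ``$x$ is an extreme point with $\cone{\mathcal G}\subseteq \ncone{\csete}{x}$ iff $x\in\csete$.'' Your sufficiency argument is sound: each generator $f\in\mathcal G$ satisfies $yf\ge b_f=xf$ for all $y\in\csete$, so $f\in\ncone{\csete}{x}$ by \eqref{eq-ncone} and hence $\cone{\mathcal G}\subseteq\ncone{\csete}{x}$ since the normal cone is a convex cone; and the $\dim\allgambles$ linearly independent active constraints cut out the singleton face $\{x\}$ of $\csete$, which is the standard vertex characterization. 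Your closing remark about the alternative, existential reading is also apt: for an arbitrary extreme point $x'$ with $\cone{\mathcal G}\subseteq\ncone{\csete}{x'}$ one only gets $x'f=\min_{y\in\csete}yf\ge b_f$, with equality failing for redundant constraints, which is precisely why pinning $x$ down via (i) first is the right move.
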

	
	\subsection{Normal cones of lower probabilities}
	In \cite{skulj2022normal}, polyhedral properties of credal sets corresponding to lower probabilities and some of their special cases were analyzed through their normal cones. The polyhedral structure is obtained via \emph{credal sets} that are associated to most models of imprecise probabilities, including $p$-boxes. Credal sets are sets of probability distributions -- usually assumed to be finitely additive, or equivalently the corresponding expectation functionals, called \emph{linear previsions} $P\colon \allgambles\to \RR$. In the most general form, a credal set on a probability space $(\states, \mathcal E)$, where $\mathcal E$ is an algebra of subsets, is given by 
	\begin{equation}\label{eq-credal-set}
		\cset = \{ P \colon P(f) \ge \lpr (f) \text{ for every }f\in \gambleset\subseteq \allgambles\}.
	\end{equation}
	The above construction corresponds to a polyhedron if the set $\gambleset$ is finite. The support function $\lpr$ is often called \emph{lower prevision}. Moreover, if $\lpr(f) = \min_{P\in \cset} P(f)$ holds for all $f\in \gambleset$, the lower prevision is called \emph{coherent} and is extended by the means of \emph{natural extension} to the entire $\allgambles$ by setting:
	\begin{equation}
		\lpr(h) = \min_{P\in \cset} P(h)
	\end{equation} 
	for every $h\in \allgambles$. 
	If $\gambleset$ consists of characteristic functions $\charf{A}$ of sets in $A\in \mathcal E$, then we are talking about \emph{lower probabilities}, and a function $L$ is usually assigned, such that $L(A) = \lpr (\charf{A})$. 
	
	Let $\cset$ be credal set corresponding to a lower probability $L$ or a coherent lower prevision $\lpr$. Then for some linear prevision $P$ in (the boundary of) $\cset$, the corresponding normal cone is equivalently denoted  by either $\ncone{\cset}{P}, \ncone{L}{P}$ or $\ncone{\lpr}{P}$. In the case of lower probabilities and their special cases, normal cones are generated by collections of indicator functions $\charf{A}$ of sets in $\mathcal E$. Thus, a normal cone $\ncone{L}{P}$ is of the form  
	\begin{equation}\label{eq-nc-lp}
		\cone{\setcol{A}} := \cone{\{ \charf{A}\colon A\in \setcol{A}\backslash\{\states\}\}} + \mathrm{lin} \{ \charf{\states}\},
	\end{equation}
	where $\mathcal A = \{ A\colon P(\charf{A}) = L(A)  \}$. 
	The element $\charf{\states}$, that is the constant 1 on $\states$, can thus appear with any real valued coefficient in \eqref{eq-cone-generated}. This is because of the constraint $P(\charf{\states}) = 1$, which denotes that linear previsions in $\cset$ are normalized, and can be understood to appear in two converse inequalities $P(\charf{\states}) \ge 1$ and $P(-\charf{\states}) \ge -1$. 
	
	Coherent lower previsions are in general superadditive, i.e. $\lpr(f+g)\ge \lpr (f) + \lpr(g)$. However, as the following proposition tells, when restricted to normal cones, they become additive.
	\begin{proposition}[Normal cone additivity]\label{prop-cone-additivity}
		Take arbitrary vectors $g, h\in \ncone{\lpr}{P}$. Then $\lpr(g+h) = \lpr (g) + \lpr(h)$. 
	\end{proposition}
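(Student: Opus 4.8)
The plan is to exploit the defining property of the normal cone, namely that a single linear prevision $P$ realizes the lower envelope $\lpr$ simultaneously on all of its members. Recall from \eqref{eq-ncone} that $f\in \ncone{\lpr}{P}$ means precisely that $P$ minimizes $P'(f)$ over all $P'\in\cset$; combined with the natural-extension identity $\lpr(f)=\min_{P'\in\cset}P'(f)$, this is equivalent to saying that $f\in\ncone{\lpr}{P}$ holds if and only if $P(f)=\lpr(f)$. Thus for the given $g,h\in\ncone{\lpr}{P}$ we already know $P(g)=\lpr(g)$ and $P(h)=\lpr(h)$.

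First I would observe that the normal cone is closed under addition. Indeed, if $P(g)\le P'(g)$ and $P(h)\le P'(h)$ for every $P'\in\cset$, then by linearity of each individual prevision we get $P(g+h)=P(g)+P(h)\le P'(g)+P'(h)=P'(g+h)$, so that $g+h$ again lies in $\ncone{\lpr}{P}$. Consequently $P$ also attains the minimum for $g+h$, which by the equivalence of the previous paragraph yields $\lpr(g+h)=P(g+h)$.

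The conclusion then follows from a single application of the linearity of the linear prevision $P$:
\begin{equation*}
	\lpr(g+h)=P(g+h)=P(g)+P(h)=\lpr(g)+\lpr(h).
\end{equation*}

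The argument is short, and there is no genuine obstacle beyond correctly recording the equivalence $f\in\ncone{\lpr}{P}\Leftrightarrow P(f)=\lpr(f)$ and verifying that the cone is additively closed. The substantive point is simply that all three vectors $g$, $h$ and $g+h$ attain their lower envelopes at one and the same prevision $P$; the general superadditivity inequality $\lpr(g+h)\ge\lpr(g)+\lpr(h)$ is then forced to an equality precisely because $P$ itself is genuinely linear.
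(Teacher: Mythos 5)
Your proof is correct: the key observation that $f\in\ncone{\lpr}{P}$ is equivalent to $P(f)=\lpr(f)$ (via the natural extension and the attainment of the minimum on the polyhedral credal set), together with additive closure of the normal cone and linearity of $P$, gives exactly the intended argument. The paper itself states this proposition without reproducing a proof (it is imported from \cite{skulj2022normal}), but your reasoning is the standard one and there is no gap.
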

	Special cases of the above proposition include \emph{comonotone additivity}, a well known characterization of 2-monotone lower probabilities. 
	
	Finally, we characterize MESCs corresponding to extreme points of lower probabilities. 
	\begin{proposition}\label{prop-lprob-triangulation}
		Let $\cone{\setcol{A}}$ be a MESC in a complete normal simplicial fan. Then:  
		\begin{enumerate}[(i)]
			\item The vectors $\{ \charf{A}\colon A\in \setcol{A} \}$ are linearly independent.
			\item No equation of the form $\sum_{\setcol{A}\backslash\{\states\}}\alpha_A \charf{A}  + \alpha_\states \charf{\states} = \charf{B}$, where $B\not\in \setcol{A}$, has a solution such that $\alpha_A\ge 0$ for every $A\in \setcol{A}\backslash\{\states\}$. 	
		\end{enumerate}	
	\end{proposition}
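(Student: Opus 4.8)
The plan is to derive both parts by specializing the general description of MESCs---the defining conditions together with Proposition~\ref{prop-full-triangulation}---to the lower-probability setting, where the generators in $\gambleset$ are the indicators $\charf{A}$ and normalization is encoded by the pair $\charf{\states},-\charf{\states}$. The starting point is the membership criterion furnished by \eqref{eq-nc-lp}: as a convex cone, $\cone{\setcol{A}}$ is generated by the rays $\{\charf{A}\colon A\in\setcol{A}\backslash\{\states\}\}$ together with the full line $\mathrm{lin}\{\charf{\states}\}$, so a vector $g$ belongs to $\cone{\setcol{A}}$ if and only if $g=\sum_{A\in\setcol{A}\backslash\{\states\}}\alpha_A\charf{A}+\alpha_\states\charf{\states}$ for coefficients with $\alpha_A\ge 0$ for each $A\neq\states$ and $\alpha_\states\in\RR$ unrestricted. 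To access the pointed framework of Section~\ref{s-nc}, I would pass to the quotient $\allgambles/\mathrm{lin}\{\charf{\states}\}$ (equivalently, restrict attention to the affine hyperplane of normalized previsions), in which the lineality collapses and $\cone{\setcol{A}}$ becomes a genuine pointed simplicial cone generated by the images of the indicators $\charf{A}$, $A\in\setcol{A}\backslash\{\states\}$.

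For part (i), since $\cone{\setcol{A}}$ is an element of a complete normal simplicial fan, its reduced generators are linearly independent by Proposition~\ref{prop-full-triangulation}(i) read in the quotient, and for full dimensionality they number $\dim\allgambles-1$ there. Lifting back, linear independence of the images of $\{\charf{A}\colon A\in\setcol{A}\backslash\{\states\}\}$ modulo $\charf{\states}$ is equivalent to linear independence of $\{\charf{A}\colon A\in\setcol{A}\backslash\{\states\}\}\cup\{\charf{\states}\}=\{\charf{A}\colon A\in\setcol{A}\}$ in $\allgambles$, which is exactly the assertion of (i).

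For part (ii) I would invoke the maximality clause in the definition of a MESC---equivalently Proposition~\ref{prop-full-triangulation}(ii)---asserting that no generator of $\gambleset$ outside the cone's generating set belongs to the cone. Here the relevant generators are the indicators $\charf{B}$ with $B\not\in\setcol{A}$; the direction $-\charf{\states}$ is subsumed by the lineality $\mathrm{lin}\{\charf{\states}\}$ and so is not a separating generator. Unfolding $\charf{B}\not\in\cone{\setcol{A}}$ by means of the membership criterion above yields precisely the non-solvability of $\sum_{A\in\setcol{A}\backslash\{\states\}}\alpha_A\charf{A}+\alpha_\states\charf{\states}=\charf{B}$ subject to $\alpha_A\ge 0$, which is the assertion of (ii).

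The delicate point throughout is the bookkeeping around the normalization line $\mathrm{lin}\{\charf{\states}\}$: one must justify that $-\charf{\states}$ may be excluded from the maximality test of (ii), and that counting $\charf{\states}$ among the $\dim\allgambles$ independent generators in (i) is consistent with $\cone{\setcol{A}}$ being full-dimensional yet non-pointed. Passing to the quotient renders both transparent, after which (i) and (ii) become direct transcriptions of the general MESC conditions recorded in Proposition~\ref{prop-full-triangulation}.
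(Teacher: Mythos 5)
The paper does not actually prove this proposition: it is quoted, together with the surrounding material of Section~\rf{s-nc}, from \cite{skulj2022normal}, so there is no in-paper argument to compare yours against. Judged on its own terms, your reconstruction is correct and is the natural one: both claims are unfoldings of the MESC definition and of Proposition~\rf{prop-full-triangulation} once the membership criterion $g\in\cone{\setcol{A}} \iff g=\sum_{A\in\setcol{A}\backslash\{\states\}}\alpha_A\charf{A}+\alpha_\states\charf{\states}$ with $\alpha_A\ge 0$, $\alpha_\states\in\RR$, is extracted from \eqref{eq-nc-lp}. Your passage to the quotient $\allgambles/\mathrm{lin}\{\charf{\states}\}$ is the right device for the one genuinely delicate point, namely that the generating set formally contains both $\charf{\states}$ and $-\charf{\states}$, so the cone is full-dimensional but not pointed and the literal ``basis of $\allgambles$'' clause in the MESC definition cannot be applied verbatim; the equivalence you use (linear independence of the images of $\{\charf{A}\colon A\in\setcol{A}\backslash\{\states\}\}$ modulo $\charf{\states}$ $\iff$ linear independence of $\{\charf{A}\colon A\in\setcol{A}\}$, since $\states\in\setcol{A}$) is correct, as is the observation that $-\charf{\states}$ lies in the lineality and therefore cannot serve as a witness in the maximality test of (ii). The only caveat worth recording is that your argument inherits whatever precise formulation of Proposition~\rf{prop-full-triangulation} holds for fans with a common lineality space; since the credal set lives in the affine hyperplane $P(\charf{\states})=1$, every normal cone contains $\mathrm{lin}\{\charf{\states}\}$, and the ``complete normal simplicial fan'' is really a fan in the quotient --- which is exactly the reading you adopt, so the derivation goes through.
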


\section{Normal cones of $p$-boxes}\label{s-ncpb}
\subsection{$p$-boxes and their credal sets}\label{ss-pbtcs}
Instead of the full structure of probability spaces, we are often concerned only with the distribution functions of specific random variables. The set of relevant events where the probabilities have to be given then shrinks considerably. In the precise case, a single distribution function $F$ describes the distribution of a random variable $X$, which gives the probabilities of the events of the form $\{ X\le x\}$. Thus $F(x) = P(X\le x)$. 

In the imprecise case, the probabilities of the above form are replaced by the corresponding lower (and upper) probabilities, resulting in sets of distribution functions called $p$-boxes \cite{ferson2003, troffaes2011probability}. A \emph{$p$-box} is a pair $(\low F, \up F)$ of distribution functions with $\low F\leqslant \up F$, where $\low F(x) = \lpr(X\leqslant x)$ and $\up F(x) = \upr(X\leqslant x)$. To every $p$-box we associate the credal set of all  distribution functions with the values between the bounds:
\begin{equation*}
	\cset(\low F, \up F) = \{ F \colon F \text{ is a distribution function}, \low F \leqslant F \leqslant \up F \}.
\end{equation*}
Clearly, $\cset(\low F, \up F)$ is a convex set of distribution functions. Conversely, since supremum and infimum of any set of distribution functions are themselves distribution functions, every set of distribution functions generates a $p$-box containing the original set.

In this paper we restrict to $p$-boxes on finite domains. Thus let $\states = \{ x_1, \ldots, x_n\}\subset \RR$, where the ordering $x_1<x_2<\cdots <x_n$ is assumed. The lower and upper bounds are then two increasing functions $\low F, \up F \colon \states \to [0, 1]$, such that $\low F \le \up F$ and $\low F(x_n)=\up F(x_n) = 1$. A description of $\cset(\low F, \up F)$ as polyhedral credal set follows. First note that every distribution function $F$ on $\states$ corresponds to a probability mass function $p_F$, such that $p_F(x_i) = F(x_i)-F(x_{i-1})$, for every $i=1, \ldots, n$, where we set $x_0 = -\infty$ and $F(x_0) = 0$. Consequently, we set $P_F$ to be the corresponding linear prevision such that $P_F(f) = \sum_{i=1}^n f(x_i)p_F(x_i)$, for every real valued function $f$ on $\states$. Denote also $A_i = \{ x_1, \ldots, x_i\}$ for every $i=1, \ldots, n$. 

The $p$-box $(\low F, \up F)$ is a special case of lower probability. The maps $\low F$ and $\up F$ correspond to the lower and the upper distribution functions: $\low F(x_i) = L(A_i)$ and $\up F(x_i) = U(A_i)$. Thus, a $p$-box can be interpreted as pair or lower and upper probabilities with the domain $\mathcal E = \{ A_i\colon i=1, \ldots, n\}$. It is sometimes beneficial to have a representation in terms of lower probabilities alone, which can be deduced by assigning $L(A^c) = 1-U(A)$. In addition to the above constraints, the non-negativity constraints $P(\charf{\{ x_i \}}) \ge 0$ need to be added, as they are not implied by $L$ defined above.

To every $p$-box $(\low F, \up F)$ a lower expectation functional $\lpr$ can be assigned through the natural extension:
\begin{equation}
	\lpr(h) = \min_{P\in \cset(\low F, \up F)}Ph.
\end{equation} 

\subsection{General structure of normal cones of $p$-boxes}\label{ss-gs}
By the characterization of $p$-boxes in terms of lower probabilities, the corresponding normal cones will be generated by the indicator functions $\charf{A_i}, \charf{A_i^c}$ and $\charf{\{ x_i \}}$ for different values of indices $i$. A distinction from general lower probability models is that the constraints on singletons $\{ x_i\}$ are only in the form $p_F(x_i)\ge 0$, i.e. the lower bounds are always zero. This is one of the properties that will have important impact on the structure of the cones. Moreover, we will restrict our analysis to maximal elementary simplicial cones -- MESCs. 

Every MESC corresponding to a $p$-box is of the form $\cone{\mathcal B}$, where $\mathcal B$ consists of sets of the form described above. We will now give  conditions for $\mathcal B$ to form a maximal elementary simplicial cone. 
\begin{corollary}[of Proposition~\ref{prop-lprob-triangulation}]\label{cor-basic-conditions}
	Let $\mathcal B$ be a collection of sets such that $\cone{\mathcal B}$ is a MESC. Then: 
	\begin{enumerate}[(i)]
		\item $\states = A_n \in \mathcal B$;
		\item $A_i\in \mathcal B\implies A_i^c\not\in \mathcal B$; 
		\item $A_i\in \mathcal B\implies \{x_{i+1}\}\not\in \mathcal B$;
		\item $A_i^c\in \mathcal B\implies \{x_{i}\}\not\in \mathcal B$.
	\end{enumerate}	
\end{corollary}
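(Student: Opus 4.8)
The plan is to deduce all four items directly from Proposition~\ref{prop-lprob-triangulation}, which asserts two things about a MESC $\cone{\mathcal B}$: the indicators $\{\charf{A}\colon A\in\mathcal B\}$ are linearly independent, and no set $B\notin\mathcal B$ (with $\charf{B}\in\gambleset$) admits a representation $\charf{B}=\sum_{A\in\mathcal B\setminus\{\states\}}\alpha_A\charf{A}+\alpha_\states\charf{\states}$ with all $\alpha_A\ge 0$. Item (i) I would read off from the form~\eqref{eq-nc-lp}: the normalization $P(\charf{\states})=1$ is an equality that is active at every prevision, so the constant gamble $\charf{\states}=\charf{A_n}$ is forced into every normal cone as the distinguished generator carrying an unrestricted real coefficient; consequently $\states=A_n\in\mathcal B$. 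Item (ii) then follows from linear independence: since $A_i$ and $A_i^c$ partition $\states$ we have $\charf{A_i}+\charf{A_i^c}=\charf{A_n}$, so if both $A_i\in\mathcal B$ and $A_i^c\in\mathcal B$ then, together with $A_n\in\mathcal B$ from (i), the three indicators $\charf{A_i},\charf{A_i^c},\charf{A_n}$ are linearly dependent, contradicting Proposition~\ref{prop-lprob-triangulation}(i).

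For (iii) and (iv) I would isolate one combinatorial mechanism and apply it twice. Suppose $S\in\mathcal B$ and a singleton $\{x_j\}\in\mathcal B$ satisfy $x_j\notin S$, and put $T:=S\cup\{x_j\}$, so that $\charf{T}=\charf{S}+\charf{\{x_j\}}$. If $T\in\mathcal B$, then $\charf{T},\charf{S},\charf{\{x_j\}}$ are three distinct generators obeying a nontrivial linear relation, contradicting Proposition~\ref{prop-lprob-triangulation}(i). If instead $T\notin\mathcal B$, then $\charf{T}=\charf{S}+\charf{\{x_j\}}$ is exactly a representation of the type forbidden by Proposition~\ref{prop-lprob-triangulation}(ii), with non-negative coefficients $1,1$ and $\alpha_\states=0$; this is again a contradiction. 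Either way $S$ and $\{x_j\}$ cannot both lie in $\mathcal B$. Item (iii) is the instance $S=A_i$, $x_j=x_{i+1}$, $T=A_{i+1}$, and item (iv) is the instance $S=A_i^c$, $x_j=x_i$, $T=A_{i-1}^c$, using $A_{i-1}^c=\{x_i,\dots,x_n\}=A_i^c\cup\{x_i\}$.

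The only place demanding care, and where I expect the sole friction, is the boundary behaviour of the auxiliary set $T$. For (iii) with $i=n-1$ one gets $T=A_n=\states$, and for (iv) with $i=1$ one gets $T=A_0^c=\states$; in both cases $T\in\mathcal B$ by item (i), so the argument lands in the linear-dependence case and needs no separate handling. I would also record the elementary distinctness and disjointness checks that keep the relation $\charf{T}=\charf{S}+\charf{\{x_j\}}$ genuinely nontrivial, namely that for $1\le i\le n-1$ the three sets $A_i,\{x_{i+1}\},A_{i+1}$ (respectively $A_i^c,\{x_i\},A_{i-1}^c$) are pairwise distinct and the singleton is disjoint from $S$, so that all coefficients in the relation are nonzero. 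Thus the work is not computational but bookkeeping around the distinguished generator $\charf{\states}$: confirming it is the unique generator carrying a free coefficient, and that every auxiliary $T$ above is either exactly this generator or else lies outside $\mathcal B$, so that precisely one of the two parts of Proposition~\ref{prop-lprob-triangulation} applies.
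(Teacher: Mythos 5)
Your proof is correct and follows essentially the same route as the paper: (i) from the normalization constraint in \eqref{eq-nc-lp}, (ii) from the linear dependence $\charf{A_i}+\charf{A_i^c}=\charf{\states}$, and (iii)--(iv) from the union identities $A_i\cup\{x_{i+1}\}=A_{i+1}$ and $A_i^c\cup\{x_i\}=A_{i-1}^c$ combined with Proposition~\ref{prop-lprob-triangulation}. Your explicit case split on whether the auxiliary set $T$ lies in $\mathcal B$ (invoking linear independence) or not (invoking the forbidden non-negative representation) is a slightly more careful bookkeeping than the paper's one-line appeal to Proposition~\ref{prop-lprob-triangulation}(ii), but it is the same argument.
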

\begin{proof}
	(i) follows from the definition \eqref{eq-nc-lp} and the reasoning given there. Further, 
	(ii) follows from the fact that if $A_i\in \mathcal B$ and $A_i^c\in \mathcal B$, then $A_i\cup A_i^c = \states$, making the set linearly dependent. Conditions (iii) and (iv) follow from the fact that $A_i \cup \{ x_{i+1}\} = A_{i+1}$ or $A_{i}^c\cup \{ x_i\} = A_{i-1}^c$ respectively, which is both forbidden by Proposition~\ref{prop-lprob-triangulation}~(ii).
\end{proof}
Let $\mathcal A$ be a collection of those sets $A_i$ such that either $A_i\in \mathcal B$ or $A_i^c\in \mathcal B$. By Corollary~\ref{cor-basic-conditions}~(ii), at most one of the inclusions is possible. We denote $\mathcal A = \{ A_{i_1}, \ldots, A_{i_k}\}$, where $\{ i_j \}$ is an increasing sequence in $j$ and
\begin{equation}
	B_j = \begin{cases}
		A_{i_j}, & \text{if } A_{i_j}\in \mathcal B  \\
		A_{i_j}^c, & \text{if } A_{i_j}^c\in \mathcal B
	\end{cases}
\end{equation} 
for every $j=1, \ldots, k$. 
In addition to sets $B_j$, singleton sets $\{ x_i \}$ need to complete $\mathcal A$ to a basis. Before proceeding to details on the possible configurations, we describe the cones, assuming $\mathcal B$ given. That is, we have a collection of sets $B_{1}, \ldots , B_{k}$, and a collection of singletons $\{ x_{j_1} \}, \ldots, \{ x_{j_r} \}$. Let us also define a sign function for the sets $B_j$:
\begin{equation}
	s(B_j) = \begin{cases}
		1, & \text{if } B_j = A_{i_j} \\
		-1, & \text{if } B_j = A_{i_j}^c.
	\end{cases}
\end{equation} 
The cone can then  be decomposed as a Minkowski sum 
\begin{equation}\label{eq-minkowsky-sum}
	\cone{\mathcal B} = \cone{\{ B_{1}, \ldots, B_{k}\}} + \cone{\{ x_{j_1} \}, \ldots, \{ x_{j_r} \}}.
\end{equation}
 While the second part is simply the set of all non-negative functions with support $\{ x_{j_1}, \ldots, x_{j_r} \}$, a more detailed description of the first part follows. 

Recall that the cone is insensitive to adding a multiple of a constant, which is $\charf{\states}$, and therefore the functions $\charf{A^c}$ can be replaced by $-\charf{A}$. Any element $h\in \cone{\{ B_{1}, \ldots, B_{k}\}}$ is then of the form
\begin{equation}\label{eq-p-box-cone-element}
	h = \sum_{j=1}^k s(B_{j})\alpha_j \charf{B_{j}},
\end{equation} 
where $\alpha_j$ are non-negative constants. Moreover, $h\in \ri{\cone{\{ B_{1}, \ldots, B_{k}\}}}$ if all $\alpha_j>0$. 
\begin{lemma}
	Let a function $h\colon \states \to \RR$ be given. The following propositions are equivalent:
	\begin{enumerate}[(i)]
		\item $h\in \ri{\cone{\{ B_{1}, \ldots, B_{k}\}}}$;
		\item $h$ is $\mathcal A$-measurable and $\mathrm{sign}(h(x')-h(x)) = s(B_{j})$ whenever $x'\in B_{{j+1}}$ and $x\in B_{j}$. 
	\end{enumerate}
	(Function $\mathrm{sign}$ assigns value $1$ to positive, $-1$ to negative numbers and 0 to 0.)
\end{lemma}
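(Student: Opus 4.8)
The plan is to reduce both implications to a single telescoping identity for the values of $h$ on the blocks of the partition induced by $\mathcal A$. Since $\mathcal A=\{A_{i_1},\dots,A_{i_k}\}$ is a chain $A_{i_1}\subset\cdots\subset A_{i_k}=\states$, it partitions $\states$ into the consecutive blocks $C_m=A_{i_m}\setminus A_{i_{m-1}}$ (with $A_{i_0}=\emptyset$), and a function is $\mathcal A$-measurable exactly when it is constant on each $C_m$; the $\mathcal A$-measurable functions thus form a $k$-dimensional subspace of $\allgambles$. By the linear independence of the generators of a MESC (Proposition~\ref{prop-full-triangulation}) the $k$ vectors $\{\charf{B_m}\}_{m=1}^k$ are independent, hence span this whole subspace, and one passes between the descriptions $\{\charf{A_{i_m}}\}$ and $\{\charf{B_m}\}$ using only $\charf{A^c}=\charf{\states}-\charf{A}$ together with $\charf{\states}=\charf{A_{i_k}}$.

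Next I would record the value of a generic combination $h=\sum_{j=1}^k\beta_j\charf{B_j}$ on each block. For $x\in C_m$ one has $\charf{A_{i_j}}(x)=1$ exactly when $j\ge m$; substituting $\charf{A^c}=\charf{\states}-\charf{A}$ then yields, for $x\in C_m$, a value $v_m=h(x)$ of the form $v_m=D+\sum_{j\ge m}s(B_j)\beta_j$, where $D$ is a constant not depending on $m$. The crucial consequence is the telescoping identity
\begin{equation*}
	v_m-v_{m+1}=s(B_m)\beta_m,\qquad m=1,\dots,k-1,
\end{equation*}
which converts the signed jump of $h$ across the $m$-th block boundary into the product of $s(B_m)$ with the coefficient $\beta_m$. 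This identity drives both directions.

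For (i)$\Rightarrow$(ii), an element of $\ri{\cone{\{B_1,\dots,B_k\}}}$ has all $\beta_j>0$; each $\charf{B_j}$ is $\mathcal A$-measurable, hence so is $h$, and the identity gives $\mathrm{sign}(v_m-v_{m+1})=s(B_m)$, which is the asserted sign condition once $v_m-v_{m+1}$ is read as the comparison of the values of $h$ on the two blocks straddling the $m$-th boundary. For (ii)$\Rightarrow$(i), measurability places $h$ in the span of $\{\charf{B_j}\}$, so $h=\sum_j\beta_j\charf{B_j}$ with unique real coefficients; inverting the identity gives $\beta_m=s(B_m)(v_m-v_{m+1})$, which the prescribed jump signs force to be strictly positive for $m=1,\dots,k-1$. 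It then remains to treat the coefficient of the constant generator $\charf{\states}=B_k$, and here I would invoke the remark that the cone is insensitive to adding a multiple of $\charf{\states}$: the overall additive level of $h$ is a free parameter coming from the normalization equality $P(\charf{\states})=1$ and imposes no sign restriction, so $h$ indeed lies in the relative interior.

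The step I expect to be the main obstacle is exactly this bookkeeping around the constant direction $\charf{\states}$: because $B_k=\states$ is at once a cone generator and, through the normalization equality, a two-sided (free) direction, one must argue carefully that strict positivity of $\beta_1,\dots,\beta_{k-1}$ is the only genuine constraint and that the level parameter never obstructs membership. A secondary point needing care is matching the set-membership formulation ``$x'\in B_{j+1}$, $x\in B_j$'' in (ii) with the block-jump identity; this amounts to checking, case by case on the signs $(s(B_j),s(B_{j+1}))$, that the comparison singles out the two blocks adjacent to the $m$-th boundary, after which the required sign is read off immediately from the telescoping identity.
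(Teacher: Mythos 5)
Your proposal is correct and follows essentially the same route as the paper's own proof: both rest on writing an $\mathcal A$-measurable $h$ as a combination of the generators and identifying the coefficient $\alpha_j$, up to the sign $s(B_j)$, with the jump of $h$ between consecutive blocks of the partition induced by the chain $\mathcal A$, with the constant direction $\charf{\states}$ left as a free (two-sided) parameter. Your version merely makes explicit the block decomposition, the telescoping identity and the dimension count that the paper's two-line argument leaves implicit, and the two points you flag as delicate (the lineality direction and matching the set-membership phrasing of (ii) to consecutive blocks) are glossed over in the paper as well.
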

\begin{proof}
	Let us first suppose that $h\in \ri{\cone{\{ B_{1}, \ldots, B_{k}\}}}$. It is then of the form \eqref{eq-p-box-cone-element}, and therefore clearly $\mathcal A$-measurable. Now take some $x'\in B_{{j+1}}$ and $x\in B_{j}$. Then we have that $h(x')-h(x)=s(B_{j})\alpha_j$, and since $\alpha_j > 0$ the sign is equal to $s(B_{j})$. 
	
	To see the converse implication, note that $\mathcal A$-measurability of $h$ implies its form \eqref{eq-p-box-cone-element}, and by the above $\alpha_j = \dfrac{h(x')-h(x)}{s(B_{j})}$, which implies non-negativity for every $j$ by the assumptions. 
\end{proof}
It remains to give detailed relation between the two cone components. 

We proceed as follows. With respect to the ordering for the sets in $\mathcal A$ according to set inclusion, we take two adjacent sets, say $A\subset A'$. They correspond to some $B_j$ and $B_{j+1}$ in one of the four possible ways: $\{ B_j, B_{j+1} \} = \{ A, A' \}, \{ B_j, B_{j+1} \} = \{ A^c, A' \},$ $\{ B_j, B_{j+1} \} = \{ A, A'^c \}$ or $\{ B_j, B_{j+1} \} = \{ A^c, A'^c \}$. Additionally, we may add $A_0 = \emptyset$ to $\mathcal A$. 
\begin{lemma}\label{lem-n-1-singletons}
	Let $A\subset A'$ be a pair of sets as described above. Then $\mathcal B$ must contain exactly $|A'\backslash A|-1$ singletons of the form $\{x\}$ for $x\in A'\backslash A$. 
\end{lemma}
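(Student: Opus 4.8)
\emph{Strategy.} The plan is to pin down the count in each ``gap'' by trapping it between a global total and a local lower bound. Adding $A_0=\emptyset$, the sets $\emptyset = A_{i_0}\subset A_{i_1}\subset\cdots\subset A_{i_k}=\states$ (recall $\states=A_n\in\mathcal B$ by Corollary~\ref{cor-basic-conditions}~(i), so $i_k=n$) cut $\states$ into $k$ consecutive gaps $G_j = A_{i_j}\setminus A_{i_{j-1}}$, $j=1,\dots,k$, of sizes $g_j=|G_j|$ with $\sum_j g_j = n$. Since these gaps partition the index set, each singleton $\{x_m\}\in\mathcal B$ lies in exactly one gap; writing $s_j$ for the number of singletons in $\mathcal B$ with index in $G_j$, we have $\sum_j s_j = r$. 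My goal is to prove $s_j = g_j-1$ for every $j$, which is exactly the assertion for the adjacent pair $A=A_{i_{j-1}}\subset A'=A_{i_j}$, where $g_j = |A'\setminus A|$.

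\emph{Global count.} Because $\cone{\mathcal B}$ is a MESC, $\mathcal B$ is a basis of $\allgambles$, so $|\mathcal B|=n$. By the construction preceding the lemma, $\mathcal B$ is the disjoint union of the $k$ sets $B_1,\dots,B_k$ and the $r$ singletons, so $k+r=n$, i.e. $r=n-k$. Hence $\sum_{j=1}^{k}(g_j-1) = n-k = r = \sum_{j=1}^{k}s_j$.

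\emph{Local lower bound (the crux).} I would establish $s_j\ge g_j-1$ for each gap by restricting to $G_j$. Let $\pi_j\colon\allgambles\to\RR^{G_j}$ send $f$ to $f|_{G_j}$; this map is linear and surjective, and since $\mathcal B$ spans $\allgambles$, the restrictions $\{\pi_j(f)\colon f\in\mathcal B\}$ span $\RR^{G_j}$, a space of dimension $g_j$. The key observation is that almost every element of $\mathcal B$ is \emph{constant} on $G_j$: each $B_l$ equals $A_{i_l}$ or $A_{i_l}^c$, and because $A$ and $A'$ are adjacent in $\mathcal A$ there is no index strictly between $i_{j-1}$ and $i_j$, so $i_l\le i_{j-1}$ or $i_l\ge i_j$; thus $\charf{A_{i_l}}$ (and hence $\charf{A_{i_l}^c}$) is identically $0$ or identically $1$ on $G_j$, and $\charf{\states}$ restricts to the constant $\mathbf 1_{G_j}$. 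Therefore the only elements of $\mathcal B$ whose restriction to $G_j$ is non-constant are the $s_j$ singletons $\{x_m\}$ with $m\in G_j$, whose restrictions are the coordinate vectors $e_m$. Consequently $\pi_j(\mathcal B)$ is contained in the span of the set $\{\mathbf 1_{G_j}\}\cup\{e_m\colon \{x_m\}\in\mathcal B,\ m\in G_j\}$, which has at most $s_j+1$ elements. Since this span must be all of $\RR^{G_j}$, we get $g_j\le s_j+1$, i.e. $s_j\ge g_j-1$.

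\emph{Conclusion and main obstacle.} Combining the two ingredients, $\sum_j s_j = \sum_j (g_j-1)$ together with $s_j\ge g_j-1$ for each $j$ forces $s_j=g_j-1$ for all $j$; specialized to the adjacent pair $A\subset A'$ this is the claim. I expect the local lower bound to be the delicate step: it hinges entirely on the adjacency hypothesis, which guarantees that no set-generator $B_l$ varies across $G_j$ and hence that the interior singletons alone must carry the burden of spanning $\RR^{G_j}$. A minor bookkeeping caveat, which I would reconcile with the construction rather than the mathematics, is the boundary identification when $A_{i_1}=\{x_1\}$ or $A_{n-1}^c=\{x_n\}$ happens to be a singleton: this only affects whether such a set is labelled in $\mathcal A$ or among the singletons and does not disturb the count $r=n-k$. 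As a cross-check, the complementary bound $s_j\le g_j-1$ also follows directly from linear independence, since $\charf{A'}-\charf{A}=\sum_{x\in A'\setminus A}\charf{\{x\}}$ would otherwise yield a dependence among elements of $\mathcal B$.
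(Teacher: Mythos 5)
Your proof is correct and rests on the same key observation as the paper's: every generator of $\cone{\mathcal B}$ other than the singletons lying inside $A'\backslash A$ is constant on $A'\backslash A$, so the rank contributed by that block of the generator matrix is at most $s_j+1$ while full rank demands $|A'\backslash A|$. The paper runs this as a row-distinctness count in the matrix $M$ and reads off the exact count locally, whereas you take the transposed (column-restriction) view and close the gap from $s_j\ge g_j-1$ to equality by summing over all gaps against $|\mathcal B|=n$ --- a harmless repackaging, and your parenthetical cross-check via $\charf{A'}-\charf{A}=\sum_{x\in A'\backslash A}\charf{\{x\}}$ already recovers the purely local upper bound if you prefer to dispense with the global step.
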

\begin{proof}
In order to form a maximal simplicial cone, $\mathcal B$ must contain exactly $n$ linearly independent vectors. Consider the matrix $M$ whose rows correspond to elements of $\states$ and columns to elements of $\mathcal B$. Enumerate elements of $\mathcal B$ and let $C_j\in \mathcal B$. Then let $m_{ij}=1$ if $C_i$ is a singleton or a set from $\mathcal A$ and $x_i\in C_j$; and $-1$ if $C_i$ is a complement of an element in $\mathcal A$ and $x_i\not\in C_j$. Thus column corresponding to the complements correspond to $\charf{C_j}-\charf{\states}$. Clearly $\mathcal B$ is linearly independent if $M$ has full rank. Now consider an element $x\in A'\backslash A$. The rows corresponding to $x$ are of the following forms:
\begin{enumerate}[(i)]
	\item 1 or $-1$ at the places corresponding to sets in $\mathcal A$ containing $A'$;
	\item 0 at the places corresponding to sets in $\mathcal A$ that are contained in $A$; 
	\item 0 at the places of singletons other than $\{ x \}$ and 1 at the place of the singleton $\{ x\}$. 
\end{enumerate}
As the parts of the vectors corresponding to (i) and (ii) are identical for all $x\in A'\backslash A$, there are exactly as many different (and therefore also linearly independent) vectors corresponding to these rows, as there are singletons $\{ x \}$ that are contained in $\mathcal B$, increased by 1. The one vector added is the one with zeros at every place corresponding to (iii). So these rows are linearly independent if and only if the number of different vectors is equal to $| A'\backslash A |$. This is equivalent to stating that exactly $|A'\backslash A|-1$ singletons must be added to $\mathcal B$ to keep full rank of $M$. 
\end{proof}

Let us denote the elements of $A'\backslash A$ with $x_1\le x_2\le \ldots\le x_m$. We consider each of the four cases separately:
\begin{description}
	\item[Case 1] $\{ B_j, B_{j+1} \} = \{ A, A' \}$. By Corollary~\ref{cor-basic-conditions}~(iii), $\{x_1\}$ cannot be in $\mathcal B$. Thus exactly $m-1$ singletons $\{x_2 \}, \ldots \{ x_m \}$ must belong to $\mathcal B$. 
	\item[Case 2] $\{ B_j, B_{j+1} \} = \{ A, A'^c \}$. By Corollary~\ref{cor-basic-conditions}~(iii) and (iv), neither $\{x_1\}$ nor $\{x_m\}$ can appear in $\mathcal B$. The remaining singletons cannot ensure full rank. The conclusion is that such a pair can only appear in the case where $|A'\backslash A| = 1$. 
	\item[Case 3] $\{ B_j, B_{j+1} \} = \{ A^c, A' \}$. In this case, all singletons $\{x_1\}, \ldots, \{x_m\}$ are allowed. Yet only $m-1$ are needed, which means that this case induces $m$ different possible  configurations. 
	\item[Case 4] $\{ B_j, B_{j+1} \} = \{ A^c, A'^c \}$. A similar reasoning as above shows that exactly the singletons $\{x_1 \}, \ldots, \{ x_{m-1} \}$ must belong to $\mathcal B$ in this case. 
\end{description}
In the following sections we characterize extreme points corresponding to the above cases. Before that, let us give an example.
\begin{example}
	Let $\states = \{ 1, \ldots, 5\}$ and denote $A_i = \{ 1, \ldots, i\}$. Consider a situation where $\mathcal B$ contains sets $A_1, A_3^c, A_4, A_5=\states$. It corresponds to Case~2, where $B_j = A_1$ and $B_{j+1} = A_3^c$. Because of $|A_3\backslash A_1| > 1$, according to our analysis, this collection cannot be completed to a MESC by adding only a singleton. 
	
	Let us demonstrate that indeed no singleton can be added to $\mathcal B$ to complete it to a MESC. As $\{1\} = A_1$ and $A_1 \cup \{2\} = A_2$ and $A_4 \cup \{5\} = \states$, the only candidates to be considered are $\{3\}$ and $\{ 4\}$. Yet, $A_3^c\cup \{3\} = A_2^c$, which is not allowed by Corollary~\ref{cor-basic-conditions}(iii). It remains to analyze adding $\{ 4\}$. In this case, however, $\charf{A_4} + \charf{A_3^c} - \charf{\{ 4\}} = \charf{A_5}$, whence the set is linearly dependent. 
\end{example}

\section{Extreme points of $p$-boxes}\label{s-eppb}
Before relating normal cones to extreme points of $p$-boxes, we provide a characterization of the extreme points that to best of my knowledge is a new one. Let a $p$-box $(\low F, \up F)$ be given, and $\mathcal M(\low F, \up F)$ be the corresponding credal set containing all compatible distribution functions. 
The correspondence between the distribution functions $F$ and linear previsions is straightforward, by setting $P_F(\{x_i\}) = F(x_i)-F(x_{i-1})$, where the additional assignment $A_0 = \emptyset$ will be handy. 
\begin{lemma}\label{lem-extreme-pbox-1}
	Let $h\in \allgambles$ be given. Then some $F\in \mathcal M(\low F, \up F)$ exists such that 
	\begin{equation}\label{eq-p-box-extreme-1}
		F(x_i) \in \{ \low F(x_i), \up F(x_i), F(x_{i-1}), F(x_{i+1}) \} =: \mathcal F_i,
	\end{equation}
	and $P_F(h)=\lpr(h)$, where $\lpr$ denotes the natural extension of the $p$-box.
	
	Moreover, every extremal distribution of $\mathcal M(\low F, \up F)$ satisfies \eqref{eq-p-box-extreme-1}. 
\end{lemma}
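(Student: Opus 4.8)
The plan is to prove the structural ``moreover'' statement first, and then obtain the existence claim as an immediate consequence. The key observation is that $\mathcal M(\low F, \up F)$ is the polytope in $\allgambles$ cut out by the bound constraints $\low F(x_i) \le F(x_i) \le \up F(x_i)$, the monotonicity constraints $F(x_{i-1}) \le F(x_i)$ (equivalently $p_F(x_i) \ge 0$) for $i = 1, \dots, n$ under the convention $F(x_0) = 0$, and the normalization $F(x_n) = 1$. Condition \eqref{eq-p-box-extreme-1} at an index $i$ says precisely that at least one of the four inequalities in which $F(x_i)$ participates --- $F(x_i) \ge \low F(x_i)$, $F(x_i) \le \up F(x_i)$, $F(x_i) \ge F(x_{i-1})$, $F(x_i) \le F(x_{i+1})$ --- holds with equality. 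Since $\low F(x_n) = \up F(x_n) = 1$ forces $F(x_n) = 1 \in \mathcal F_n$, condition \eqref{eq-p-box-extreme-1} can only fail at an index $i \le n-1$.

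First I would prove the contrapositive of the ``moreover'' claim: if $F \in \mathcal M(\low F, \up F)$ violates \eqref{eq-p-box-extreme-1} at some $i \le n-1$, then $F$ is not extreme. Violation means all four inequalities above are strict, i.e. $\max\{\low F(x_i), F(x_{i-1})\} < F(x_i) < \min\{\up F(x_i), F(x_{i+1})\}$. Hence for small enough $\epsilon > 0$ the two functions $F^{\pm}$ that agree with $F$ off $x_i$ and take the value $F(x_i) \pm \epsilon$ at $x_i$ are again increasing, lie within the bounds, and (as $i \ne n$) still satisfy $F^{\pm}(x_n) = 1$; thus $F^{\pm} \in \mathcal M(\low F, \up F)$. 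Since $F = \tfrac12(F^+ + F^-)$ with $F^+ \ne F^-$, the point $F$ is not an extreme point. This establishes that every extreme point satisfies \eqref{eq-p-box-extreme-1}.

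For the existence claim, I would note that $\lpr(h) = \min_{F \in \mathcal M(\low F, \up F)} P_F(h)$ is the minimum of the linear functional $F \mapsto P_F(h)$ over the nonempty compact polytope $\mathcal M(\low F, \up F)$ (nonempty since $\low F$ itself belongs to it). Such a minimum is attained at an extreme point of the polytope, and by the previous paragraph that extreme point satisfies \eqref{eq-p-box-extreme-1}; it also satisfies $P_F(h) = \lpr(h)$ by construction, yielding the required $F$.

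The only real care needed --- and where I expect the bookkeeping to concentrate --- is the verification that the single-coordinate perturbation $F^{\pm}$ stays inside $\mathcal M(\low F, \up F)$, in particular at the boundary index $i = 1$ (where $F(x_0) = 0$ plays the role of the left neighbour) and in confirming that changing $F$ only at $x_i$ disturbs no constraint other than the four listed. A self-contained constructive alternative to invoking ``the minimum is attained at a vertex'' is a direct descent: starting from any minimizer $F$ of $P_F(h)$, if \eqref{eq-p-box-extreme-1} fails at $i$ then feasibility of both perturbations $F^{\pm}$ together with minimality forces $h(x_i) = h(x_{i+1})$ (otherwise the objective change $\pm\epsilon\,(h(x_i) - h(x_{i+1}))$ would let us strictly decrease $P_F(h)$), after which $F(x_i)$ can be slid to the nearest endpoint of $\mathcal F_i$ without changing $P_F(h)$; iterating over the indices, with a suitable termination argument, drives $F$ into the form \eqref{eq-p-box-extreme-1} while preserving optimality.
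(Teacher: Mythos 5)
Your proof is correct, but it takes a genuinely different route from the paper's. The paper writes $h=\sum_i\alpha_i\charf{A_i}$, so that $P_F(h)=\sum_i\alpha_iF(x_i)$, and for each index where \eqref{eq-p-box-extreme-1} fails moves $F(x_i)$ to $\max\{F(x_{i-1}),\low F(x_i)\}$ or $\min\{F(x_{i+1}),\up F(x_i)\}$ according to the sign of $\alpha_i$, which does not increase $P_F(h)$; the ``moreover'' part is then deduced from the fact that an extreme point is the \emph{unique} minimizer of $P_F(h)$ for $h$ in the relative interior of its normal cone. You instead prove the ``moreover'' part first by the elementary midpoint criterion --- if all four inequalities at $x_i$ are strict, then $F=\tfrac12(F^++F^-)$ for two distinct feasible perturbations, so $F$ is not extreme --- and obtain existence from the standard fact that a linear functional on a nonempty compact polytope attains its minimum at a vertex. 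Your version buys two things: it avoids the normal-cone machinery entirely (needing only the definition of extreme point), and it sidesteps a point the paper glosses over, namely that its one-coordinate repair must be iterated over all indices and one should check the process terminates with \eqref{eq-p-box-extreme-1} holding simultaneously at every $i$. What the paper's sign-of-$\alpha_i$ argument buys in exchange is explicit information about \emph{which} endpoint of $\mathcal F_i$ the optimal $F(x_i)$ is pushed to as a function of $h$, which is the form of the statement actually reused in the subsequent theorem on the range \eqref{eq-p-box-extreme-2} and in the case analysis relating cones to extreme points; your descent remark at the end recovers some of this but is not needed for the lemma itself.
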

 
\begin{proof}
	Every $h\in\allgambles$ can be (uniquely) represented in the form $h = \sum_{i=1}^n \alpha_i \charf{A_i}$. It follows that $P_F(h) =  \sum_{i=1}^n \alpha_i F(x_i)$. Take any $x_i$ and suppose that $F(x_i)\not\in \mathcal F_i$. Now consider two cases
	\begin{enumerate}[C{a}se 1)]
		\item $\alpha_i \ge 0$. Define 
		\begin{equation}
			\tilde F(x_j) = \begin{cases}
							F(x_j), & j \neq i \\
							\max\{ F(x_{i-1}), \low F(x_i) \}, & i = j.
						\end{cases}
		\end{equation}
		Clearly, $\tilde F$ preserves monotonicity and remains in $\mathcal M(\low F, \up F)$. Moreover, $\tilde F(x_i) \le F(x_i)$, and therefore $P_{\tilde F}(h) \le P_F(h)$. 
		\item $\alpha_i \le 0$. In this case we proceed in a symmetric manner, by setting
		\begin{equation}
			\tilde F(x_j) = \begin{cases}
				F(x_j), & j \neq i \\
				\min\{ F(x_{i+1}), \up F(x_i) \}, & i = j.
			\end{cases}
		\end{equation}
		Again, $\tilde F$ is monotone and belongs to $\mathcal M(\low F, \up F)$. We also have that $P_{\tilde F}(h) \le P_F(h)$.
	\end{enumerate}
	Obviously, we can use any of the cases if $\alpha_i = 0$. 
	
	Now, if $F$ is an extreme point, then it is the unique distribution function that minimizes $P_F(h)$ for every $h$ in the relative interior of its normal cone. Suppose, $F(x)\not\in \mathcal F_i$ for some $i$. Then, by the above, some $\tilde F$ would exist such that $P_{\tilde F}(h) \le P_F(h)$, which would contradict the uniqueness of the extreme point. 
\end{proof}
\begin{theorem}
	Every extreme point in $\mathcal M(\low F, \up F)$ satisfies 
	\begin{equation}\label{eq-p-box-extreme-2}
		F(x_i) \in \{ \low F(x_j) \colon j\ge i \} \cup \{ \up F(x_k) \colon k \le i \}. 
	\end{equation}
\end{theorem}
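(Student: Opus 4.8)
The plan is to upgrade the pointwise membership $F(x_i)\in\mathcal F_i$ of Lemma~\ref{lem-extreme-pbox-1} into the global statement \eqref{eq-p-box-extreme-2} by invoking extremality in a stronger form than Lemma~\ref{lem-extreme-pbox-1} does. It is worth noting at the outset that the pointwise condition \eqref{eq-p-box-extreme-1} alone cannot yield \eqref{eq-p-box-extreme-2}: a distribution that is flat on a block with a value strictly between the relevant bounds satisfies \eqref{eq-p-box-extreme-1} at every index through mutual inheritance of neighbouring values, yet it violates \eqref{eq-p-box-extreme-2} and is not extreme. Hence the argument must use genuine extremality, and the natural device is the one behind Lemma~\ref{lem-extreme-pbox-1}, namely feasible perturbations, but applied to an entire maximal constant run rather than to one point at a time.

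Concretely, I would fix an index $i$ and let $[a,b]$ be the maximal block of indices on which $F$ takes the common value $c=F(x_i)$, so that $F(x_{a-1})<c<F(x_{b+1})$ with the conventions $F(x_0)=0$ and $F(x_n)=1$. Consider the two candidate distributions $F^{+}$ and $F^{-}$ obtained from $F$ by adding, respectively subtracting, a small $\varepsilon>0$ on $\{x_a,\dots,x_b\}$ and leaving all other values untouched. I would first check that for $\varepsilon$ small enough each $F^{\pm}$ is again an element of $\mathcal M(\low F,\up F)$: the only probability masses that change are those at $x_a$ and at $x_{b+1}$, both strictly positive by maximality of the run, so monotonicity and non-negativity survive, and among the bound constraints only $\low F\le F\le\up F$ on $[a,b]$ can be activated.

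The key step is then a dichotomy followed by a squeeze. If both $F^{+}$ and $F^{-}$ were feasible for some $\varepsilon>0$, then $F=\tfrac12(F^{+}+F^{-})$ would contradict extremality; hence at least one direction is blocked. Raising the run is blocked exactly when $c=\up F(x_m)$ for some $m\in[a,b]$ (the top run $b=n$ being the degenerate case $c=1=\up F(x_n)$), and lowering it is blocked exactly when $c=\low F(x_{m'})$ for some $m'\in[a,b]$. To place the witnessing index on the correct side of $i$, I would use monotonicity of the bounds together with feasibility of $F$: since $\up F$ is non-decreasing and $a\le m$, we have $\up F(x_a)\le\up F(x_m)=c$, while $c=F(x_a)\le\up F(x_a)$; these force $c=\up F(x_a)$ with $a\le i$. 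Symmetrically, if lowering is blocked then $c=\low F(x_b)$ with $b\ge i$. In either branch $F(x_i)=c$ lands in the set on the right-hand side of \eqref{eq-p-box-extreme-2}, and since $i$ was arbitrary the theorem follows.

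The part I expect to require the most care is not the squeeze, which is short, but the verification that the run-level perturbations stay inside $\mathcal M(\low F,\up F)$ together with the correct treatment of the two boundary situations: the equality constraint $F(x_n)=1$ forbids moving the run that contains $x_n$, which is why that run is isolated as the trivial case $c=1=\low F(x_n)$, and the bottom boundary $a=1$, where lowering is limited by $F(x_0)=0$ and again produces $c=\low F$. Keeping this bookkeeping straight, and making explicit why the pointwise Lemma~\ref{lem-extreme-pbox-1} must be strengthened, is the main obstacle; everything else is routine.
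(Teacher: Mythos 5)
Your proof is correct, but it follows a genuinely different route from the paper's. The paper argues via the supporting functional: it picks $h$ with $F$ the unique minimizer of $P_F(h)$, writes $h=\sum_i\alpha_i\charf{A_i}$ so that $P_F(h)=\sum_i\alpha_iF(x_i)$, collapses the contribution of the maximal constant run to $F(x_k)\sum_{i=k}^{k+m}\alpha_i$, and uses the sign of this coefficient sum to decide in which single direction the run can be shifted without increasing $P_F(h)$, contradicting uniqueness of the minimizer. You instead bypass the functional altogether and use the midpoint characterization of extremality: perturb the whole run up and down by $\varepsilon$, note that feasibility of both perturbations would give $F=\tfrac12(F^{+}+F^{-})$, and then use monotonicity of $\low F$ and $\up F$ together with feasibility of $F$ to push the blocking witness to the correct side of $i$ (the squeeze $c\le\up F(x_a)\le\up F(x_m)=c$ giving $a\le i$, and symmetrically $c=\low F(x_b)$ with $b\ge i$). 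Your argument is self-contained --- it does not actually need Lemma~\ref{lem-extreme-pbox-1}, since the run may consist of a single point --- and is arguably more elementary; what the paper's route buys is the explicit link between the extreme point and the coefficients of a gamble in its normal cone, which is the currency of the surrounding sections. One small slip in your bookkeeping: you label the top run $b=n$ as the degenerate case $c=1=\up F(x_n)$, but for $i<n$ the membership you need is $F(x_i)\in\{\low F(x_j)\colon j\ge i\}$, so the witness should be $\low F(x_n)$ (as you in fact write in your final paragraph); moreover this case needs no special treatment at all, since raising is blocked there and your squeeze already yields $c=\up F(x_a)$ with $a\le i$.
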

\begin{proof}
	Let $F$ be an extreme point in $\mathcal M(\low F, \up F)$ and $h\in \allgambles$ such that $F$ is the unique distribution function satisfying $P_F(h) = \lpr (h)$. Now consider some $x_i\in \states$. Then, by Lemma~\ref{lem-extreme-pbox-1}, either $F(x_i)\in \{ \low F(x_i), \up F(x_i) \}$ or $F(x_i) \in \{ F(x_{i-1}), F(x_{i+1}\}$. We only need to consider the latter case. Let $x_k, x_{k+1}, \ldots, x_{k+1}$ be a maximal subset of consequent elements of $\states$ containing $x_i$ and satisfying $F(x_{k+j}) = F(x_k)$ for every $j=0, \ldots, m$. Maximality means that $F(x_{k-1})<F(x_k)<F(x_{k+m+1})$. 
	
	Let $h=\sum_{i=1}^n \alpha_i \charf{A_i}$, which implies that 
	\begin{equation}
		P_F(h) = \sum_{i=1}^n \alpha_i F(x_i) = \sum_{i=1}^{k-1} \alpha_i F(x_i) + \sum_{i=k}^{k+m} \alpha_i F(x_i) + \sum_{i=k+m+1}^{n} \alpha_i F(x_i).
	\end{equation}
	 Using the fact that $F(x_{k+j})$ are equal for $j=0, \ldots, m$, the middle term of the latter expression can be rewritten as $F(x_k)\sum_{i=k}^{k+m} \alpha_i$. Now consider two cases:
	\begin{enumerate}[C{a}se 1)]
		\item $\sum_{i=k}^{k+m} \alpha_i \ge 0$. Then if $F(x_{k+m}) = \low F(x_{k+m})$, then $F$ satisfies \eqref{eq-p-box-extreme-2}. Otherwise, that is, if $F(x_{k+m}) > \low F(x_{k+m})$, we define
		\begin{equation}
			\tilde F(x_i) = \begin{cases}
				F(x_i), & i<k \text{ or } i > k+m; \\
				\max\{ F(x_{k-1}), \low F(x_{k+m}) \}, & k\le i \le k+m.
			\end{cases}
		\end{equation}
		Recall that $F(x_{k-1}) < F(x_k)$. 
		Clearly then, $\tilde F(x_i) < F(x_i)$ for $k\le i \le k+m$ and therefore $P_{\tilde F} (h) \le P_F(h)$, which contradicts the uniqueness of $F$ as an extreme point. This contradiction now ensures that $F(x_i) = \low F(x_{k+m})$.
		\item $\sum_{i=k}^{k+m} \alpha_i \le 0$. This case is symmetric to the first one. Thus, we either have that $F(x_k) = \up F(x_k)$, whence $F$ satisfies \eqref{eq-p-box-extreme-2}, or contrary, $F(x_k)<\up F(x_k)$. To see that the latter case again leads to contradiction, recall that $F(x_{k+j})< F(x_{k+m+1})$ and $F(x_{k+j})< \up F(x_{k})$ for every $j=0, \ldots, m$. We then define
		\begin{equation}
			\tilde F(x_i) = \begin{cases}
				F(x_i), & i<k \text{ or } i > k+m; \\
				\min\{ F(x_{k+m+1}), \up F(x_{k}) \}, & k\le i \le k+m.
			\end{cases}
		\end{equation}
		Again we have that $\tilde F(x_i) > F(x_i)$ for $k\le i \le k+m$ and therefore $P_{\tilde F} (h) \le P_F(h)$.
	\end{enumerate}
\end{proof}

\subsection{Relating extreme points to their normal cones}
Classification of cases related to normal cones in Section~\ref{ss-gs} allows us to characterize all possible extreme points of $p$-boxes according to their global and local behaviour. Recall that every normal cone is characterized by a chain $\mathcal A$ and a sign function $s$ that specifies whether a set $A\in \mathcal A$ or its complement $A^c$ belongs to the set of positive generators of the cone. The missing generators are singletons, whose possible configurations are analyzed at the end of that section. The collection of all generator sets is denoted by $\mathcal{B}$. 

Let $F$ be an extreme point whose corresponding normal cone is generated by a collection $\mathcal B$, and $P_F$ be the corresponding linear prevision. The natural extension $p$-box is denoted by $\lpr$. Take any pair $(B_j, B_{j+1})$ as described in Section~\ref{ss-gs}, and denote the elements of $A'\backslash A$ with $x_1\le x_2\le \ldots\le x_m$ and set $x_0 = \max A$. 
\begin{description}
	\item[Case 1] $\{ B_j, B_{j+1} \} = \{ A, A' \}$. Given that $A$ and $A'$ are among the generators of the cone, $P_F(A) = \lpr(A) = \low F(x_0)$ and $P_F(A') = \lpr(A') = \low F(x_m)$ must hold. In addition, singletons $\{ x_2 \}, \ldots, \{ x_m \}$ belong to $\mathcal B$. The constraints on singletons are all of the form $P(x_i) \ge 0$, hence, in the extreme point we have $0 = P_F(x_i) = F(x_i)-F(x_{i-1})$ for every $i=2, \ldots, m$. Thus 	
	\begin{equation}\label{eq-case1}
		F(x_i) = 
		\begin{cases}
			\low F(x_0),  & i = 0; \\
			 \low F(x_m), & 1\le i \le m. 
		\end{cases}
	\end{equation}
	This case is possible exactly if $\up F(x_1) \ge \low F(x_m)$, which is needed to ensure $F(x_1) \le \up F(x_1)$.  
	\item[Case 2] $\{ B_j, B_{j+1} \} = \{ A, A'^c \}$. This case is only possible if $|A'\backslash A|=1$, and in that case no other restrictions are needed. Hence, 
	\begin{equation}\label{eq-case2-1}
		F(x_0) = P_F(A) = \lpr(A) = \low F(x_0)
	\end{equation}
	and $P_F(A'^c)= \lpr(A'^c) = 1-\upr (A')$, whence
	\begin{equation}\label{eq-case2-2}
		F(x_1) = P_F(A') = \upr (A') = \up F(x_1).
	\end{equation}
	\item[Case 3] $\{ B_j, B_{j+1} \} = \{ A^c, A' \}$. In this case, $m$ different extreme points are possible, as any collection of $m-1$ singletons out of $m$ can be used as generators. Let $\{ x_k\}$ be the missing one of the singletons. First we have that $P_F(A^c) = \lpr(A^c) = 1-\upr(A) = 1-\up F(x_0) = 1-P_F(A)$, and therefore $F(x_0) = \up F(x_0)$. On the other hand we have that $F(x_m) = \low F(x_m)$. The constraints on the singletons imply that
	\begin{equation}\label{eq-case3}
		F(x_i) = 
		\begin{cases}
			 \up F(x_0), & 0 \le i \le k-1; \\
			\low F(x_m), & k \le i\le m.
		\end{cases}
	\end{equation}
	This case is possible whenever $\up F(x_0) \ge \low F(x_{k-1})$ and $\low F(x_m) \le \up F(x_k)$. 
	\item[Case 4] $\{ B_j, B_{j+1} \} = \{ A^c, A'^c \}$. This remaining case is symmetric to the first one. Reasoning similar as above gives that $F(x_0) = \up F(x_0)$ and $F(x_m) = \up F(x_m)$. The fact that $P_F(x_i) = 0$ for $i=1, \ldots m-1$ implies that $F(x_i) = F(x_{i-1})$ for those indices. Thus,we have
	\begin{equation}\label{eq-case4}
		F(x_i) = 
		\begin{cases}
			\up F(x_0), & 0 \le i \le m-1; \\
			\up F(x_m), & i = m.
		\end{cases}
	\end{equation}
	This case is possible whenever $\up F(x_0) \ge \low F(x_{m-1})$. 
\end{description}
\begin{example}\label{ex-cones-cases}
	We illustrate the above cases with the following examples. Let $\states = \{ 1, \ldots, 5\}$ and denote $A_i = \{ 1, \ldots i\}$ for $i\in \states$. 
	\begin{enumerate}
		\item  Let $p$-box $(\low F, \up F)$ be given with $\low F(i) = \frac{i}{5}$ and $\up F(i) = 1$ for every $i\in \states$. Consider the cone generated by $\mathcal B = \{ A_1, A_4, A_5, \{ 3\}, \{ 4 \} \}$. This example corresponds to Case 1 with $B_j = A_1$ and $B_{j+1} = A_4$. The corresponding extreme distribution is depicted with blue line in Fig.~\ref{fig-case1}(left). Its values are $F(1) = \low F(1), F(2) = F(3) = F(4) = \low F(4)$ and $F(5) = \low F(5)$. 
		\item Consider again the above example where $\low F$ is modified by setting $\low F(3)=\frac 45$ (see Fig.~\ref{fig-case1} (right)). Now the same extreme $F$ corresponds to two MESCs. The first one, as above, and the second one is the one generated by $\mathcal B' = \{ A_1, A_3, A_4, A_5, \{ 3\} \}$. The normal cone corresponding to the extreme point $F$ thus consists of the two adjacent cones that triangulate it. (See also Case 1A.)
		\item Let this time $\low F = (0, 0, 0.2, 0.2, 0.5, 1)$, given as a vector of values $(\low F(i))_{i=1, \ldots, 5}$, and similarly, $\up F = (0.2, 0.4, 0.6, 0.8, 1)$. Let the cone generated by $\mathcal B = \{ A_1, A_2^c, A_4^c, A_5, \{ 3\}\}$ be given. We thus have a combination of Cases 2 and 4. The corresponding extreme distribution is $F = (0, 0, 0.4, 0.4, 0.8, 1)$. The depiction of this case is given in Fig.~\ref{fig-case3}. 
		\item Finally, we give an example illustrating Case 3. Let $\mathcal B$ contain $A_1^c, A_4$ and $A_5$. According to Case 3, it must additionally contain exactly two of the three singletons $\{ 2\}, \{ 3\}$ and $\{ 4\}$, yielding three different cones, whose corresponding extreme points are depicted in Fig.~\ref{fig-case4}.
		
		The probability mass functions corresponding to the three cases are respectively $p_1 = (0.4, 0.2, 0, 0, 0.4), p_2 = (0.4, 0, 0.2, 0, 0.4)$ and $p_3 = (0.4, 0, 0, 0.2, 0.4)$. Representatives of each cone are for instance the sums of the characteristic functions of the generator sets (except $A_5 = \states$), giving $h_1 = (1, 2, 3, 3, 1), h_2 = (1, 3, 2, 3, 1)$ and $h_3 = (1, 3, 3, 2, 1)$. Let $P_i$ be the linear previsions corresponding to the above respective probability mass functions $p_i$. Then we have that $P_i(h_i) = 1.2$ and $P_j(h_i) = 1.4$ for all $i\neq j$, which clearly confirms the $p_i$ are extreme distributions corresponding to the respective normal cones. 
	\end{enumerate}
\end{example}
	
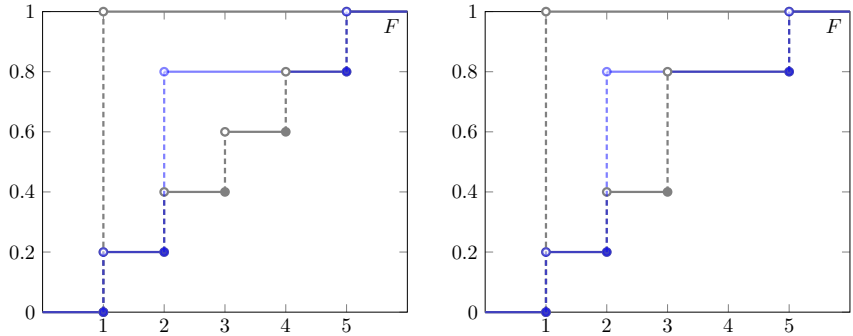
\begin{figure} 
	\begin{center}
		\begin{minipage}{.45\textwidth}
			\begin{tikzpicture}[scale=0.7]
	\begin{axis}[
		clip=false,
		jump mark left,
		ymin=0,ymax=1,
		xmin=0, xmax=6,
		xtick = { 1, ..., 5},
		ytick = {0, 1/5, 2/5, 3/5, 4/5, 1},
		every axis plot/.style={very thick},
		discontinuous,
		table/create on use/cumulative distribution/.style={
			create col/expr={\pgfmathaccuma + \thisrow{f(x)}}   
		}
		]
		\addplot [gray] table [y=cumulative distribution]{
			x f(x)
			0 0
			1 1/5
			2 1/5
			3 1/5
			4 1/5
			5 1/5
			6 0
		};
		
		\addplot [gray] table [y=cumulative distribution]{
			x f(x)
			0 0
			1 1
			2 0
			3 0
			4 0
			5 0
			6 0
		};
		
		\addplot [blue, opacity = 0.5] table [y=cumulative distribution]{
			x f(x)
			0 0
			1 1/5
			2 3/5
			3 0/5
			4 0/5
			5 1/5
			6 0
		} node[ below left, black, opacity = 1] {$F$};
		
	\end{axis}
\end{tikzpicture}
		\end{minipage}
		\begin{minipage}{.45\textwidth}
			\begin{tikzpicture}[scale=0.7]
	\begin{axis}[
		clip=false,
		jump mark left,
		ymin=0,ymax=1,
		xmin=0, xmax=6,
		xtick = { 1, ..., 5},
		ytick = {0, 1/5, 2/5, 3/5, 4/5, 1},
		every axis plot/.style={very thick},
		discontinuous,
		table/create on use/cumulative distribution/.style={
			create col/expr={\pgfmathaccuma + \thisrow{f(x)}}   
		}
		]
		\addplot [gray] table [y=cumulative distribution]{
			x f(x)
			0 0
			1 1/5
			2 1/5
			3 2/5
			4 0/5
			5 1/5
			6 0
		};
		
		\addplot [gray] table [y=cumulative distribution]{
			x f(x)
			0 0
			1 1
			2 0
			3 0
			4 0
			5 0
			6 0
		};
		
		\addplot [blue, opacity = 0.5] table [y=cumulative distribution]{
			x f(x)
			0 0
			1 1/5
			2 3/5
			3 0/5
			4 0/5
			5 1/5
			6 0
		} node[ below left, black, opacity = 1] {$F$};
		
	\end{axis}
\end{tikzpicture}		
\end{minipage}
\end{center}
	\caption{Extreme points corresponding to Example~\ref{ex-cones-cases}--1 (left) and --2 (right).}\label{fig-case1}
\end{figure}	
	
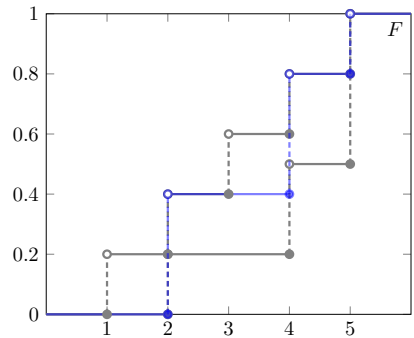
\begin{figure} 
	\begin{center}
		\begin{tikzpicture}[scale=0.7]
			\begin{axis}[
				clip=false,
				jump mark left,
				ymin=0,ymax=1,
				xmin=0, xmax=6,
				xtick = { 1, ..., 5},
				ytick = {0, 1/5, 2/5, 3/5, 4/5, 1},
				every axis plot/.style={very thick},
				discontinuous,
				table/create on use/cumulative distribution/.style={
					create col/expr={\pgfmathaccuma + \thisrow{f(x)}}   
				}
				]
				\addplot [gray] table [y=cumulative distribution]{
					x f(x)
					0 0
					1 0
					2 1/5
					3 0/5
					4 0.3
					5 0.5
					6 0
				};
				
				\addplot [gray] table [y=cumulative distribution]{
					x f(x)
					0 0
					1 0.2
					2 0.2
					3 0.2
					4 0.2
					5 0.2
					6 0
				};
				
				\addplot [blue, opacity = 0.5] table [y=cumulative distribution]{
					x f(x)
					0 0
					1 0
					2 0.4
					3 0
					4 0.4
					5 0.2
					6 0
				} node[ below left, black, opacity = 1] {$F$};
				
			\end{axis}
		\end{tikzpicture}
	\end{center}
	\caption{Extreme distribution corresponding to Example~\ref{ex-cones-cases}--3.}\label{fig-case3}
\end{figure}

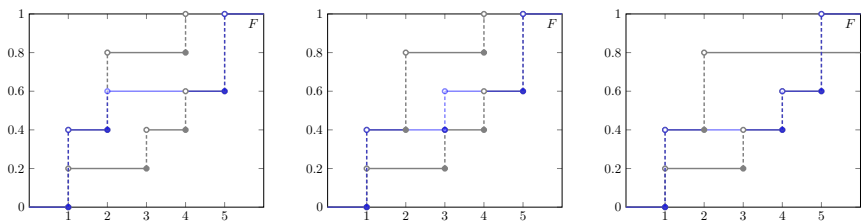
\begin{figure} 
	\begin{center}
		\begin{minipage}{.3\textwidth}
			\begin{tikzpicture}[scale=0.45]
				\begin{axis}[
					clip=false,
					jump mark left,
					ymin=0,ymax=1,
					xmin=0, xmax=6,
					xtick = { 1, ..., 5},
					ytick = {0, 1/5, 2/5, 3/5, 4/5, 1},
					every axis plot/.style={very thick},
					discontinuous,
					table/create on use/cumulative distribution/.style={
						create col/expr={\pgfmathaccuma + \thisrow{f(x)}}   
					}
					]
					\addplot [gray] table [y=cumulative distribution]{
						x f(x)
						0 0
						1 .2
						2 0
						3 .2
						4 0.2
						5 0.4
						6 0
					};
					
					\addplot [gray] table [y=cumulative distribution]{
						x f(x)
						0 0
						1 0.4
						2 0.4
						3 0
						4 0.2
						5 0
						6 0
					};
					
					\addplot [blue, opacity = 0.5] table [y=cumulative distribution]{
						x f(x)
						0 0
						1 0.4
						2 0.2
						3 0
						4 0
						5 0.4
						6 0
					} node[ below left, black, opacity = 1] {$F$};
					
				\end{axis}
			\end{tikzpicture}
		\end{minipage}
		\begin{minipage}{.3\textwidth}
			\begin{tikzpicture}[scale=0.45]
				\begin{axis}[
					clip=false,
					jump mark left,
					ymin=0,ymax=1,
					xmin=0, xmax=6,
					xtick = { 1, ..., 5},
					ytick = {0, 1/5, 2/5, 3/5, 4/5, 1},
					every axis plot/.style={very thick},
					discontinuous,
					table/create on use/cumulative distribution/.style={
						create col/expr={\pgfmathaccuma + \thisrow{f(x)}}   
					}
					]
					\addplot [gray] table [y=cumulative distribution]{
						x f(x)
						0 0
						1 .2
						2 0
						3 .2
						4 0.2
						5 0.4
						6 0
					};
					
					\addplot [gray] table [y=cumulative distribution]{
						x f(x)
						0 0
						1 0.4
						2 0.4
						3 0
						4 0.2
						5 0
						6 0
					};
					
					\addplot [blue, opacity = 0.5] table [y=cumulative distribution]{
						x f(x)
						0 0
						1 0.4
						2 0
						3 0.2
						4 0
						5 0.4
						6 0
					} node[ below left, black, opacity = 1] {$F$};
					
				\end{axis}
			\end{tikzpicture}		
		\end{minipage}
	\begin{minipage}{.3\textwidth}
		\begin{tikzpicture}[scale=0.45]
			\begin{axis}[
				clip=false,
				jump mark left,
				ymin=0,ymax=1,
				xmin=0, xmax=6,
				xtick = { 1, ..., 5},
				ytick = {0, 1/5, 2/5, 3/5, 4/5, 1},
				every axis plot/.style={very thick},
				discontinuous,
				table/create on use/cumulative distribution/.style={
					create col/expr={\pgfmathaccuma + \thisrow{f(x)}}   
				}
				]
				\addplot [gray] table [y=cumulative distribution]{
					x f(x)
					0 0
					1 .2
					2 0
					3 .2
					4 0.2
					5 0.4
					6 0
				};
				
				\addplot [gray] table [y=cumulative distribution]{
					x f(x)
					0 0
					1 0.4
					2 0.4
					3 0
					4 0
					5 0
					6 0
				};
				
				\addplot [blue, opacity = 0.5] table [y=cumulative distribution]{
					x f(x)
					0 0
					1 0.4
					2 0
					3 0
					4 0.2
					5 0.4
					6 0
				} node[ below left, black, opacity = 1] {$F$};
				
			\end{axis}
		\end{tikzpicture}
	\end{minipage}
	\end{center}
	\caption{Extreme points corresponding to Example~\ref{ex-cones-cases}--4, with the excluded singletons $\{2\}$ (left), $\{3\}$ (middle) and $\{4\}$ (right).}\label{fig-case4}
\end{figure}	

\subsection{Adjacency structure of the normal cones}
In this section we identify possible adjacency relations between the normal cones of the form $\cone{\mathcal B}$ whose general structure is described in Section~\ref{ss-gs}. Recall that $\mathcal B$ is formed by a chain of sets $\mathcal A$, whose members or their complements are contained in $\mathcal B$, and a set of singletons. The possible relations between them are described in Section~\ref{ss-gs}. As follows from Lemma~\ref{lem-adjacent-cone}, the cones adjacent to $\cone{\mathcal B}$ are obtained by replacing each member of $\mathcal B$ by another element. 

Let us first consider the singletons in $\mathcal B$. To every singleton $\{ x\}$ a pair of adjacent sets $A\subset A'$ exists so that $x\in A'\backslash A$. As before, we will enumerate the elements of $A'\backslash A$ by $x_1, \ldots, x_m$, consistently with the order in $\states$. Again we consider the cases as in Section~\ref{ss-gs}. For each case, a singleton $\{ x_i\}$ is omitted from $\mathcal B$ and replaced by either another singleton in $A'\backslash A$ or a set $B$ so that $A\subset B \subset A'$. We cannot replace it by other elements without violating the requirement by Lemma~\ref{lem-n-1-singletons}. First, the candidates are identified, and then Lemma~\ref{lem-adjacent-cone} is used to check whether the candidate forms an adjacent cone. In case where more than one candidate is possible, the actual conical structure is deduced from the values of $\low F$ and $\up F$. 
\begin{description}
	\item[Case 1A] $\{ B_j, B_{j+1} \} = \{ A, A' \}$. Let $\{ x_i\}$ be the excluded singleton. Note that the singletons for this case are exactly $\{ x_2\}, \ldots, \{ x_m\}$. The only candidate to replace $\{ x_i\}$ among singletons is thus $\{ x_1\}$. Yet, it would result in a cone structure not consistent with any case of the classification from Section~\ref{ss-gs}. 
	
	Another possibility is either a set $B$ or its complement. Taking $B=A\cup\{x_1, \ldots, x_{i-1}\}$ would result in the case $(A, B, A')$ where both pairs $(A, B)$ and $(B, A')$ together with the remaining singletons forms a structure consistent with the classification. 
	
	The only set $C^c$ such that $(A, C^c, A')$ remains consistent with the classification is the case where $C=A\cup \{ x_1\}$. The induced pairs $(A, C^c)$ and $(C^c, A')$ clearly comply with the classifications. 
	
	Now we have two candidates that must be additionally verified whether they indeed form adjacent cones. Let us calculate the normal vector $t$ to the hyperplane generated by the remaining sets. In fact we are only interested in its values on $A'\backslash A$. For every remaining singleton $\{x_j\}$ we have that $t\charf{\{ x_j \}} = 0$, which implies $t(x_j) = 0$. Moreover, we have that $t\charf{A} = t\charf{A'} = 0$, whence $t\charf{A'\backslash A} = t\charf{A'} - t\charf{A} = 0$. Now only $t(x_1)$ and $t(x_i)$ are non-zero, and by the above, we have that $t(x_1) + t(x_i) = 0$. Thus, we can take $t(x_1) = -1$ and $t(x_i) = 1$. For the excluded vector we then have that $t\charf{\{ x_i \}} = t(x_i) = 1$. For the first adjacency candidate we obtain $t\charf{B} = t(x_1) = -1$, which has the opposite sign from $t\charf{\{ x_i \}}$, and consequently $B$ is an adequate candidate. However, in the case of $C^c$ we have $t\charf{C^c} = t(\charf{\states}-\charf{C}) = -t(x_1) = 1$, concluding that $C^c$ does not satisfy the requirements. Thus, we have shown that we have a unique adjacent cone for this case. The unique adjacent cone is then of the form Case 1B (see below).
	
	If now $F$ is a distribution function corresponding to the initial case. The distribution function $F'$ corresponding to the adjacent cone equals 
	\begin{equation}\label{eq-case-1a}
		F'(x_k) = 
		\begin{cases}
			 \low F(x_{i-1}), & 1\le k\le i-1; \\
			 F(x_k), & \text{otherwise}.
		\end{cases}
	\end{equation}
	Indeed, $\up F(x_1) \ge \low F(x_m)$, which follows from the condition for Case 1, clearly implies $\up F(x_1) \ge \low F(x_m) \ge \low F(x_{i-1})$, which allows the above assignment, and therefore confirms that this is the only adjacent extreme point for this case. Notice also that $F' \le F$. 
	
	\item[Case 2A] $\{ B_j, B_{j+1} \} = \{ A, A'^c \}$. In this case there are no singletons from $A'\backslash A$ in $\mathcal B$ to replace.

	\item[Case 3A] $\{ B_j, B_{j+1} \} = \{ A^c, A' \}$. The set $\mathcal B$ corresponding to this case contains any $m-1$ singletons out of $\{ x_1\}, \ldots, \{ x_m\}$. Let $\{ x_j\}$ be the one not present, and $\{ x_i\}$ the one we exclude. Similar reasoning as above gives $t(x_i) = 1$ and $t(x_j) = -1$ as the only non-zero components for the normal vector, where again, $t\charf{\{ x_i \}} = 1$. 
	
	Now, the first candidate to replace $\{ x_i\}$ is the singleton $\{x_j\}$, and since $t\charf{\{ x_j \}} = -1$, it corresponds to an adjacent cone. 
	
	The remaining candidates are sets $B = A\cup\{ x_1, \ldots, x_{i\vee j-1}\}$ and $C^c$, where $C = A\cup\{ x_1, \ldots, x_{i\wedge j}\}$. We have that $t\charf{B} = t(x_{i\wedge j})$ and $t\charf{C^c} = t(x_{i\vee j})$. So in case where $i>j$, $B$ fits requirements and if $i<j$, then $C^c$ does. So, in this case we have two possible adjacent cones. The one actually corresponding to a particular case is identified as follows. 
	
	Let $F$ correspond to the extreme point corresponding to our initial case and $F'$ to the extreme point corresponding to the adjacent cone. 
	By the analysis of Case 3, we have that 
	\begin{equation}
		F(x_k) = 
		\begin{cases}
			\up F(x_0), & 1 \le k \le j-1, \\
				\low F(x_m), & j \le k\le m.
		\end{cases}
	\end{equation}	
	Let us first consider the case $i<j$. The replacement of $\{ x_i\}$ with $\{ x_j\}$ would result in $F'(x_k) = \low F(x_m)$ for every $k\ge i$. This is only possible if $\low F(x_m)\le \up F(x_i)$, whereas in the opposite case where  $\low F(x_m)\ge \up F(x_i)$, replacing $\{x_i\}$ with $C^c$ results in $F'(x_k) = \up F(x_i)$ for $i\le k \le j-1$, and $F'(x_k) = F(x_k)$ elsewhere. Clearly, only one of the possibilities for $F'$ lies withing the bounds. The adjacent cones correspond to the Cases 3A and 3B (second variant) respectively.   
	
	In the case where $i>j$ replacing $\{x_i\}$ with $\{x_j\}$ is possible in the case where $\low F(x_i) \le \up F(x_0)$. The resulting distribution function $F'$ then satisfies $F'(x_k) = \up F(x_0)$ for $j\le k \le i-1$ and $F'(x_k) = F(x_k)$ elsewhere. In the case where $\low F(x_i) \ge \up F(x_0)$, $\{ x_i\}$ must be replaced by $B$, which results in $F(x_k) = \low F(x_{i-1})$ for $j\le k \le i-1$, and $F'(x_k) = F(x_k)$ elsewhere. The adjacent cones correspond to the Case 3A and 3B (first variant) respectively. 
	
	Let us summarize. The adjacent extreme point depends on the following conditions: 
	\begin{description}
		\item[$i < j.$] Two adjacent cones are possible in this case, depending on:  
		\begin{description}
			\item[$\low F(x_m)\le \up F(x_i).$] In this case, $\{ x_i\}$ is replaced by $\{ x_j\}$, and the corresponding extreme distribution is
			\begin{equation}\label{eq-case-3a-1}
				F'(x_k) = 
				\begin{cases}
					\low F(x_m), & i\le k \le j-1; \\
					F(x_k), & \text{otherwise}.
				\end{cases}
			\end{equation}
			Hence, $F'\ge F$ holds. 
			\item[$\low F(x_m)\ge \up F(x_i).$] In this case, $\{ x_i\}$ is replaced by $C^c$ where $C = A\cup\{ x_1, \ldots, x_{i}\}$, and the corresponding extreme distribution is
			\begin{equation}\label{eq-case-3a-2}
				F'(x_k) = 
				\begin{cases}
					\up F(x_i), & i \le k \le j-1; \\
					F(x_k), & \text{otherwise}. 
				\end{cases}
			\end{equation}
			Again we have $F'\ge F$. 
		\end{description}
		\item[$i > j.$] We have again two adjacent cones possible, depending on:
		 \begin{description}
		 	\item[$\low F(x_i)\le \up F(x_0).$] In this case, $\{ x_i\}$ is replaced by $\{ x_j\}$, and the corresponding extreme distribution is
		 	\begin{equation}\label{eq-case-3a-3}
		 		F'(x_k) = 
		 		\begin{cases}
		 			\up F(x_0), & j\le k \le i-1; \\
		 			F(x_k), & \text{otherwise}.
		 		\end{cases}
		 	\end{equation}
		 	Hence, $F'\le F$ holds. 
		 	\item[$\low F(x_i)\ge \up F(x_0).$] In this case, $\{ x_i\}$ is replaced by $B = A\cup\{ x_1, \ldots, x_{i-1}\}$, and the corresponding extreme distribution is
		 	\begin{equation}\label{eq-case-3a-4}
		 		F'(x_k) = 
		 		\begin{cases}
		 			\low F(x_{i-1}), & j \le k \le i-1; \\
		 			F(x_k), & \text{otherwise}. 
		 		\end{cases}
		 	\end{equation}
		 	Again we have $F'\le F$. 
		 \end{description}
	\end{description}
	
	\item[Case 4A] $\{ B_j, B_{j+1} \} = \{ A^c, A'^c \}$. In this case the excluded $\{ x_i\}$ cannot be replaced by a singleton. The only possible candidates are the sets $B^c$, where $B = A\cup\{x_1, \ldots, x_{i}\}$ and $C = A'\backslash \{ x_m\}$. Scalar products with the normal vectors then reveal that $B^c$ fits, while $C$ does not. Here again we have a unique adjacent normal cone corresponding to the Case 4B (second variant). 
	
	Let $F$ and $F'$ correspond to the initial and the adjacent cone respectively. Then we have that 
	\begin{equation}\label{eq-case-4a}
		F'(x_k) = 
		\begin{cases}
			\up F(x_{i-1}), & i-1\le k\le m-1; \\
			F(x_k), & \text{otherwise}.
		\end{cases}
	\end{equation}
	The condition allowing the above assignment is $\up F(x_{i-1}) \ge \low F(x_{m-1})$, which is implied by the conditions $F(x_0) \ge \low F(x_{m-1})$ and $\up F(x_0)\le \up F(x_{i-1})$ that follow from the requirements for Case 4 and monotonicity of $\up F$. This additionally confirms that $F'$ is the unique extreme distribution adjacent to $F$. We can also notice that $F'\ge F$. 
\end{description}
It remains to analyze adjacent cones for the cases where sets from $\mathcal A$ are excluded from $\mathcal B$. The cases where sets from $\mathcal A$ are replaced by singletons are already covered by the Cases 1A--4A, analyzed above.  Thus we will focus to cases where those sets are replaced by sets of the same type, that is either $B$ or $C^c$, where $B$ or $C$ together with $\mathcal A$ forms a chain. Our analysis will this time consider triples $(B_j, B_{j+1}, B_{j+2})$ whose components are obtained by an adjacent chain $A\subset A' \subset A''$ as its elements or their complements. We will thus consider 4 cases with two variants each. Denote the elements of $A'\backslash A = \{ x_1, \ldots, x_{m'}\}$ and $A''\backslash A' = \{ x_{m'+1}, \ldots, x_{m}\}$. 
\begin{description}
	\item[Case 1B] $\{ B_j, B_{j+1}, B_{j+2} \} = \{ A, A', A'' \}$. Clearly, there is no other $B$ that could replace $A'$ and still satisfy the requirements described by general Cases 1--4. The only possible non-singleton set is a complement $C^c$ of some $A\subset C \subset A''$. The general situation described by Case 2 reveals that the only candidate is $C=A\cup \{ x_1\}$, that does turn out to generate an adjacent cone. Reversing the analysis also covers the subvariant $\{ B_j, B_{j+1}, B_{j+2} \} = \{ A, A'^c, A'' \}$. 
	
	The other possibility comes from Case 1A, where $A'$ is replaced by $\{ x_{m'+1}\}$. The actual adjacent cone and the corresponding extreme distribution is analyzed as follows. 	
	Let $F$ be the distribution function corresponding to the initial cone \eqref{eq-case1} and $F'$ the one corresponding to the adjacent cone. We thus have: 
	\begin{equation}
		F(x_k) = 
		\begin{cases}
			\low F(x_0), & k = 0; \\
			\low F(x_{m'}), & 1\le k \le m'; \\
			\low F(x_{m}), & m'+1\le k \le m. 
		\end{cases}
	\end{equation}	
	
	Which one of the possible candidates is an adjacent cone is analyzed as follows.

	The case where $A'$ is replaced by $\{ x_{m'+1}\}$ leads to the Case 1A, where 
	\begin{equation}\label{eq-case-1b-1}
		F'(x_k) = \low F(x_m) \text{ for }1\le k\le m.
	\end{equation}
	 Such $F'$ only lies within the bounds if $\low F(x_m)\le \up F(x_1)$. In this case $F'\ge F$. 
	 
	 If the relation $\low F(x_m)\ge \up F(x_1)$ holds, then $C^c$ can only replace $A'$ where $C=A\cup\{x_1\}$. This case corresponds to our second variant of Case 1B. The corresponding $F''$ then satisfies 	 
	 \begin{equation}\label{eq-case-1b-2}
	 		F''(x_k) = 
			\begin{cases}
 				\up F(x_1), & 1\le k \le m'; \\
				F(x_k), & \text{otherwise}.
			\end{cases}
	\end{equation}
	Because $F$ lies within the bounds, we can deduce that $\up F(x_1)\ge \low F(x_{m'})$, whence $F''\ge F$ follows again. 
	
	The subvariant $\{ B_j, B_{j+1}, B_{j+2} \} = \{ A, A'^c, A'' \}$ is only adjacent to the primary variant, even though $A'^c$ could be replaced by $\{ x_{m'+1}\}$ that would fall under the Case A1. However, the analysis there, showed that the corresponding cone is not adjacent. Another way to observe this, is to notice that condition $\up F(x_1)\ge \low F(x_{m'})$ required by the subvariant implies that required for the primary variant, which is therefore always viable unique neighbour. 
	
	Notice that in this and other cases where the adjacent cone is not unique, we have a borderline situation where both cases are possible. The two cases then correspond to two elements of a triangulation of the normal cone corresponding to a single extreme point. 
	
	\item[Case 2B] $\{ B_j, B_{j+1}, B_{j+2} \} = \{ A, A', A''^c \}$. This case is only possible if $A''=A'\cup\{ x_m\}$. Again, no other $B$ fits, by the analysis of the general Case 2, so we consider the complements $C^c$. The only possibility is again $C = A\cup\{ x_1\}$, that indeed generates an adjacent cone. The subvariant $\{ B_j, B_{j+1}, B_{j+2} \} = \{ A, A'^c, A''^c \}$ is again covered by reversing the above analysis. 
	Let $F$ and $F'$ be the extremal distributions corresponding to the original cone and its neighbour respectively. By the above, we have that $m=m'+1$ and it follows from the general structure that
	\begin{equation}
		F(x_k) = 
		\begin{cases}
			\low F(x_0), & k = 0; \\
			\low F(x_{m'}), & 1\le k \le m'; \\
			\up F(x_{m}), & k = m. 
		\end{cases}
	\end{equation}		
 	Further we have that 
 	\begin{equation}\label{eq-case-2b}
 		F'(x_k) = 
 		\begin{cases}
 			\low F(x_0), & k = 0; \\
 			\up F(x_1), & 1\le k \le m'; \\
 			\up F(x_{m}) = F(x_k), & k = m. 
 		\end{cases}
 	\end{equation}
 	Since $\low F(x_{m'}) = \low F(x_{m-1}) \le \up F(x_1)$, this adjacent extreme distribution $F'$ always lies within the bounds if $F$ does and vice versa. Also, $F'\ge F$ holds.  
	
	\item[Case 3B] $\{ B_j, B_{j+1}, B_{j+2} \} = \{ A^c, A', A'' \}$. Removing $A'$ leaves the singletons $\{ x_1\}, \ldots, \{ x_m\}$ with exception of $\{x_{m'+1}\}$ and $\{ x_j\}$ for some $1\le j \le m'$. Again there are no possibilities for $B$ replacing $A'$. The candidate of the form $C^c$ is $C=A\cup\{ x_1, \ldots, x_j\}$, that turns out adequate. The reverse analysis again covers the subvariant  $\{ B_j, B_{j+1}, B_{j+2} \} = \{ A^c, A'^c, A'' \}$.
	
	Another possible adjacent cone is the one described in Case 3A. Let us now analyze the actual cones based on the values of $\low F$ and $\up F$. First we have that 
	\begin{equation}
		F(x_k) = 
		\begin{cases}
			\up F(x_0), & 0\le k \le j-1; \\
			\low F(x_{m'}), & j\le k \le m'; \\
			\low F(x_{m}), & m'+1\le k \le m. 
		\end{cases}
	\end{equation}
	First of the two candidates to replace $A'$ is $C^c$. In this case we obtain the following extreme distribution:
	\begin{equation}\label{eq-case-3b-1}
		F'(x_k) = 
		\begin{cases}
			\up F(x_0), & 0\le k \le j-1; \\
			\up F(x_{j}), & j\le k \le m'; \\
			\low F(x_{m}), & m'+1\le k \le m. 
		\end{cases}
	\end{equation}
	Such $F$ is only feasible if $\up F(x_j) \ge \low F(x_{m'})$, which is implied by $F$ lying between the bounds; and $\up F(x_j) \le \low F(x_m)$ to ensure monotonicity of $F'$. Relation $F'\ge F$ is then also implied. 
	
	In the remaining case, $\{ x_{m'+1}\}$ replaces $A'$ and the corresponding extreme distribution is 
	\begin{equation}\label{eq-case-3b-2}
		F(x_k) = 
		\begin{cases}
			\up F(x_0), & 0\le k \le j-1; \\
			\low F(x_{m}), & j\le k \le m. 
		\end{cases}
	\end{equation}
	This case is of course only feasible if $\low F(x_m) \le \up F(x_j)$. Again we have that $F'\ge F$. These cases again cover all possible values of $\low F$ and $\up F$. 
	
	The second subvariant $\{ B_j, B_{j+1}, B_{j+2} \} = \{ A^c, A'^c, A'' \}$ again allows two possible adjacent cones obtained by replacing $A'^c$. The first one is the first subvariant of this Case 3B and the second one leads to Case 3A. Let us start with the starting extreme distribution. Let $\{x_j\}$ be the missing singleton according to Case 3. We have:
	\begin{equation}
		F(x_k) = 
		\begin{cases}
			\up F(x_0), & 0\le k \le m'-1; \\
			\up F(x_{m'}), & m'\le k \le j-1; \\
			\low F(x_{m}), & j\le k \le m. 
		\end{cases}
	\end{equation}
	The first adjacent cone is obtained by replacing $A'^c$ with $B=A'\cup \{ x_{m+}, \ldots, x_{j-1}\}$ resulting in the extreme distribution
	\begin{equation}\label{eq-case-3b-3}
		F'(x_k) = 
		\begin{cases}
			\up F(x_0), & 0\le k \le m'-1; \\
			\low F(x_{j-1}), & m'\le k \le j-1; \\
			\low F(x_{m}), & j\le k \le m. 
		\end{cases}
	\end{equation}
	The starting distribution lies within the bounds, which implies that $\low F(x_{j-1}) \le \up F(x_{m'})$, which is required for the above distribution to also lie within the bounds. Moreover, to ensure its monotonicity, we must have that $\low F(x_{j-1}) \ge \up F(x_{0})$. We have that $F'\le F$. 
	
	The remaining case is obtained by replacing $A'^c$ with $\{ x_{m'}\}$, resulting in the distribution 
	\begin{equation}\label{eq-case-3b-4}
		F''(x_k) = 
		\begin{cases}
			\up F(x_0), & 0\le k \le m'-1; \\
			\up F(x_{0}), & m'\le k \le j-1; \\
			\low F(x_{m}), & j\le k \le m. 
		\end{cases}
	\end{equation}
	This case is clearly feasible exactly if $\low F(x_{j-1}) \le \up F(x_{0})$. Again we have that $F''\le F$.

	\item[Case 4B] $\{ B_j, B_{j+1}, B_{j+2} \} = \{ A^c, A', A''^c \}$. This case is only possible if $A''=A'\cup \{ x_m\}$. Removing $A'$ leaves $\{ x_m\}$ and $\{ x_j\}$ omitted for some $1\le j \le m'$. Again, no $B$ can replace $A'$, and taking $C=A\cup\{ x_1, \ldots, x_j\}$, gives $C^c$ as a candidate that again proves adequate. The subvariant $\{ B_j, B_{j+1}, B_{j+2} \} = \{ A^c, A'^c, A''^c \}$ is again covered by reversing the analysis. 
	
	Let $F$ be the extremal distribution corresponding to the first subvariant. We have:
	\begin{equation}
		F(x_k) = 
		\begin{cases}
			\up F(x_0), & 0\le k \le j-1; \\
			\low F(x_{m'}), & j\le k \le m'; \\
			\up F(x_{m}), & k=m. 
		\end{cases}
	\end{equation}
	The only adjacent cone is the one belonging to the second subvariant, whose extremal distribution is 
	\begin{equation}\label{eq-case-4b-1}
		F'(x_k) = 
		\begin{cases}
			\up F(x_0), & 0\le k \le j-1; \\
			\up F(x_{j}), & j\le k \le m'; \\
			\low F(x_{m}), & j\le k \le m. 
		\end{cases}
	\end{equation}
	Clearly, this distribution lies within the bounds whenever $\up F(x_j) \ge \low F(x_{m'})$ which is exactly in the case where $F$ is feasible. We have that $F'\ge F$. 

	Yet, the reverse analysis shows that in addition to the first subvariant, the second one can have another adjacent cone, the one corresponding to Case 4A. Let this time $F$ correspond to the extremal distribution corresponding to the second subvariant. Thus, 
	\begin{equation}
		F(x_k) = 
		\begin{cases}
			\up F(x_0), & 0\le k \le m'-1; \\
			\up F(x_{m'}), & m'\le k \le m-1; \\
			\up F(x_{m}), & k=m. 
		\end{cases}
	\end{equation}
	The first adjacent cone is obtained by replacing $A'^c$ with $B = A\cup \{ x_1, \ldots, x_{m-1}\}$ inducing the extreme distribution
	\begin{equation}\label{eq-case-4b-2}
		F'(x_k) = 
		\begin{cases}
			\up F(x_0), & 0\le k \le m'-1; \\
			\low F(x_{m-1}), & m'\le k \le m-1; \\
			\up F(x_{m}), & k=m. 
		\end{cases}
	\end{equation}
	It is only feasible in the case where $\low F(x_{m-1}) \ge \up F(x_0)$. We have that $F' \le F$. 
	
	The remaining case is obtained by replacing $A'^c$ with $\{ x_{m'}\}$. It results in 
	\begin{equation}\label{eq-case-4b-2}
		F''(x_k) = 
		\begin{cases}
			\up F(x_0), & 0\le k \le m-1; \\
			\up F(x_{m}), & k=m. 
		\end{cases}
	\end{equation}
	It is feasible exactly if $\low F(x_{m-1}) \le \up F(x_0)$. Again, $F''\le F$. 
\end{description}
\begin{example}\label{ex-adjacent1}
	Let $\states = \{ 1, \ldots, 5\}$ and denote $A_i = \{ 1, \ldots, i\}$ for $i\in \states$, and $p$-box $(\low F, \up F)$ be given with a vector of its values on $\states$. Consider the following examples of pairs of adjacent cones and corresponding extreme points. 
	\begin{enumerate}
		\item Let $\low F(i) = \frac{i}{5}$ and $\up F(i) = 1$ for every $i\in \states$. 
		Consider the cone generated by $\mathcal B = \{ A_1, A_3, A_4, A_5, \{ x_3\}\}$ (see Case 1). The corresponding extreme point is the distribution function $F = (0.2, 0.6, 0.6, 0.8, 1)$ (see Fig. \ref{fig-ex1}, left). Let us now generate an adjacent cone by replacing $A_3$ with another set. This corresponds to Case 1B, with $(A_1, A_3, A_4)$ and $m' = 3, m = 4$. Inequality $\low F(x_4) < \up F(x_1)$ implies that replacing $A_3$ with $\{ x_4\}$ results in an adjacent cone, which is thus generated by  $\mathcal B' = \{ A_1, A_4, A_5, \{ x_3\}, \{ x_4\}\}$ and corresponds to the extreme point $F'=(0.6, 0.6, 0.6, 0.8, 1)$ (see Fig. \ref{fig-ex1}, right). 
		\item Let $p$-box $(\low F, \up F)$ be given by $\low F = (0.2, 0.2, 0.6, 0.8, 1)$ and $\up F = (0.4, 0.4, 1, 1, 1)$. 
		Consider the cone generated by $\mathcal B = \{ A_1, A_2^c, A_3, A_4, A_5\}$, which corresponds to the extreme distribution function $F = (0.2, 0.4, 0.6, 0.8, 1)$ (see Fig. \ref{fig-ex2}, left). 
				
		Let us now generate an adjacent cone by replacing $A_3$ with another set. This corresponds to Case 3B, with $(A_2^c, A_3, A_4)$. Thus we have $x_0 = 2, x_m' = 3, x_m = 4$ and $x_j = 3$. Further, we have $\low F(x_m) = \low F(4) = 0.6 < \up F(x_j) = \up F(3) = 1$. This implies that $\{ 4\}$ is the right replacement. The adjacent cone is thus generated by  $\mathcal B' = \{ A_1, A_2^c, A_4, A_5, \{4\}\}$ corresponding to the extreme point $F'=(0.2, 0.4, 0.8, 0.8, 1)$ (see Fig. \ref{fig-ex2}, right). 
		\item Let $p$-box $(\low F, \up F)$ be given with $\low F = (0, 0, 0.8, 0.8, 1)$ and $\up F = (0.2, 0.4, 0.8, 0.8, 1)$. 
		Consider the cone generated by $\mathcal B = \{ A_1^c, A_3, A_4, A_5, \{ x_3\}\}$, which corresponds to the distribution function $F = (0.2, 0.4, 0.8, 0.8, 1)$ (see Fig. \ref{fig-ex3}, left). 
		
		An adjacent cone obtained by replacing $\{ 3\}$ with another set corresponds to Case 3A, with $(A_1^c, A_3)$ and $\{ 2\}$ left out. We then have $x_j = 2, x_i = 3, x_m = 3$ and $x_0 = 1$. Since $i>j$ and $\up F(x_0) = \up F(1) = 0.2 \le 0.8 = \low F(3) = \low F(x_i)$,  $\{ 3\}$ is replaced by $A_2 = A_{i-1}$, resulting in the adjacent cone generated by  $\mathcal B' = \{ A_1^c, A_2, A_3, A_4, A_5\}$ and corresponding to the extreme point $F'=(0.2, 0.2, 0.8, 0.8, 1)$ (see Fig. \ref{fig-ex3}, right).
	\end{enumerate}
	
\end{example}
\begin{figure}
	\begin{center}
	\begin{minipage}{.45\textwidth}
	\begin{tikzpicture}[scale=0.7]
		\begin{axis}[
			clip=false,
			jump mark left,
			ymin=0,ymax=1,
			xmin=0, xmax=6,
			xtick = { 1, ..., 5},
			ytick = {0, 1/5, 2/5, 3/5, 4/5, 1},
			every axis plot/.style={very thick},
			discontinuous,
			table/create on use/cumulative distribution/.style={
				create col/expr={\pgfmathaccuma + \thisrow{f(x)}}   
			}
			]
			\addplot [gray] table [y=cumulative distribution]{
				x f(x)
				0 0
				1 1/5
				2 1/5
				3 1/5
				4 1/5
				5 1/5
				6 0
			};
			
			\addplot [gray] table [y=cumulative distribution]{
				x f(x)
				0 0
				1 1
				2 0
				3 0
				4 0
				5 0
				6 0
			};
			
			\addplot [blue, opacity = 0.5] table [y=cumulative distribution]{
				x f(x)
				0 0
				1 1/5
				2 2/5
				3 0/5
				4 1/5
				5 1/5
				6 0
			} node[ below left, black, opacity = 1] {$F$};
		
		\end{axis}
	\end{tikzpicture}
\end{minipage}
\begin{minipage}{.45\textwidth}
		\begin{tikzpicture}[scale=0.7]
			\begin{axis}[
		clip=false,
		jump mark left,
		ymin=0,ymax=1,
		xmin=0, xmax=6,
		xtick = { 1, ..., 5},
		ytick = {0, 1/5, 2/5, 3/5, 4/5, 1},
		every axis plot/.style={very thick},
		discontinuous,
		table/create on use/cumulative distribution/.style={
			create col/expr={\pgfmathaccuma + \thisrow{f(x)}}   
		}
		]
		\addplot [gray] table [y=cumulative distribution]{
			x f(x)
			0 0
			1 1/5
			2 1/5
			3 1/5
			4 1/5
			5 1/5
			6 0
		};
		
		\addplot [gray] table [y=cumulative distribution]{
			x f(x)
			0 0
			1 1
			2 0
			3 0
			4 0
			5 0
			6 0
		};

		\addplot [blue, opacity = 0.5] table [y=cumulative distribution]{
			x f(x)
			0 0
			1 3/5
			2 0/5
			3 0/5
			4 1/5
			5 1/5
			6 0
		} node[ below left, black, opacity = 1] {$F'$};
	\end{axis}
	\end{tikzpicture}
\end{minipage}
\end{center}
	\caption{$p$-boxes with the adjacent extreme points from Example \ref{ex-adjacent1}--1 depicted.} \label{fig-ex1}
\end{figure}

\begin{figure}
	\begin{center}
		\begin{minipage}{.45\textwidth}
			\begin{tikzpicture}[scale=0.7]
				\begin{axis}[
					clip=false,
					jump mark left,
					ymin=0,ymax=1,
					xmin=0, xmax=6,
					xtick = { 1, ..., 5},
					ytick = {0, 1/5, 2/5, 3/5, 4/5, 1},
					every axis plot/.style={very thick},
					discontinuous,
					table/create on use/cumulative distribution/.style={
						create col/expr={\pgfmathaccuma + \thisrow{f(x)}}   
					}
					]
					\addplot [gray] table [y=cumulative distribution]{
						x f(x)
						0 0
						1 1/5
						2 0/5
						3 2/5
						4 1/5
						5 1/5
						6 0
					};
					
					\addplot [gray] table [y=cumulative distribution]{
						x f(x)
						0 0
						1 2/5
						2 0/5
						3 3/5
						4 0/5
						5 0/5
						6 0
					};
					
					\addplot [blue, opacity = 0.5] table [y=cumulative distribution]{
						x f(x)
						0 0
						1 1/5
						2 1/5
						3 1/5
						4 1/5
						5 1/5
						6 0
					} node[ below left, black, opacity = 1] {$F$};
					
				\end{axis}
			\end{tikzpicture}
		\end{minipage}
		\begin{minipage}{.45\textwidth}
			\begin{tikzpicture}[scale=0.7]
				\begin{axis}[
					clip=false,
					jump mark left,
					ymin=0,ymax=1,
					xmin=0, xmax=6,
					xtick = { 1, ..., 5},
					ytick = {0, 1/5, 2/5, 3/5, 4/5, 1},
					every axis plot/.style={very thick},
					discontinuous,
					table/create on use/cumulative distribution/.style={
						create col/expr={\pgfmathaccuma + \thisrow{f(x)}}   
					}
					]
					\addplot [gray] table [y=cumulative distribution]{
						x f(x)
						0 0
						1 1/5
						2 0/5
						3 2/5
						4 1/5
						5 1/5
						6 0
					};
					
					\addplot [gray] table [y=cumulative distribution]{
						x f(x)
						0 0
						1 2/5
						2 0/5
						3 3/5
						4 0/5
						5 0/5
						6 0
					};
					
					\addplot [blue, opacity = 0.5] table [y=cumulative distribution]{
						x f(x)
						0 0
						1 1/5
						2 1/5
						3 2/5
						4 0/5
						5 1/5
						6 0
					} node[ below left, black, opacity = 1] {$F'$};
				\end{axis}
			\end{tikzpicture}
		\end{minipage}
	\end{center}
	\caption{$p$-boxes with the adjacent extreme points from Example \ref{ex-adjacent1}--2 depicted.} \label{fig-ex2}
\end{figure}

\begin{figure}
	\begin{center}
		\begin{minipage}{.45\textwidth}
			\begin{tikzpicture}[scale=0.7]
				\begin{axis}[
					clip=false,
					jump mark left,
					ymin=0,ymax=1,
					xmin=0, xmax=6,
					xtick = { 1, ..., 5},
					ytick = {0, 1/5, 2/5, 3/5, 4/5, 1},
					every axis plot/.style={very thick},
					discontinuous,
					table/create on use/cumulative distribution/.style={
						create col/expr={\pgfmathaccuma + \thisrow{f(x)}}   
					}
					]
					\addplot [gray] table [y=cumulative distribution]{
						x f(x)
						0 0
						1 0/5
						2 0/5
						3 4/5
						4 0/5
						5 1/5
						6 0
					};
					
					\addplot [gray] table [y=cumulative distribution]{
						x f(x)
						0 0
						1 1/5
						2 1/5
						3 2/5
						4 1/5
						5 0/5
						6 0
					};
					
					\addplot [blue, opacity = 0.5] table [y=cumulative distribution]{
						x f(x)
						0 0
						1 1/5
						2 1/5
						3 2/5
						4 0/5
						5 1/5
						6 0
					} node[ below left, black, opacity = 1] {$F$};
					
				\end{axis}
			\end{tikzpicture}
		\end{minipage}
		\begin{minipage}{.45\textwidth}
			\begin{tikzpicture}[scale=0.7]
				\begin{axis}[
					clip=false,
					jump mark left,
					ymin=0,ymax=1,
					xmin=0, xmax=6,
					xtick = { 1, ..., 5},
					ytick = {0, 1/5, 2/5, 3/5, 4/5, 1},
					every axis plot/.style={very thick},
					discontinuous,
					table/create on use/cumulative distribution/.style={
						create col/expr={\pgfmathaccuma + \thisrow{f(x)}}   
					}
					]
				\addplot [gray] table [y=cumulative distribution]{
					x f(x)
					0 0
					1 0/5
					2 0/5
					3 4/5
					4 0/5
					5 1/5
					6 0
				};
				
				\addplot [gray] table [y=cumulative distribution]{
					x f(x)
					0 0
					1 1/5
					2 1/5
					3 2/5
					4 1/5
					5 0/5
					6 0
				};
					
					\addplot [blue, opacity = 0.5] table [y=cumulative distribution]{
						x f(x)
						0 0
						1 1/5
						2 0/5
						3 3/5
						4 0/5
						5 1/5
						6 0
					} node[ below left, black, opacity = 1] {$F'$};
				\end{axis}
			\end{tikzpicture}
		\end{minipage}
	\end{center}
	\caption{$p$-boxes with the adjacent extreme points from Example \ref{ex-adjacent1}--3 depicted.} \label{fig-ex3}
\end{figure}
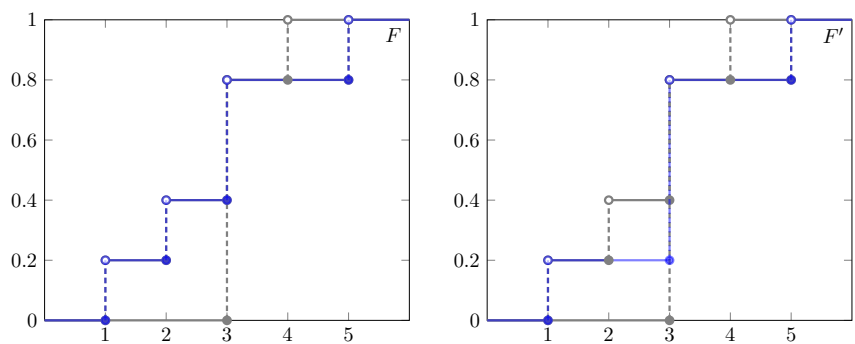

\section{Conclusions}
The main contribution of this manuscript is a complete characterization and classification of extreme points of $p$-boxes. This is achieved by utilizing general results on normal cones corresponding to polyhedral sets of probability distributions, and particularly those of lower probability models. The classification not only allows describing extreme points in relation to vectors whose expectations they minimize. In addition, the adjacency structure is also explored that in principle allows finding all extreme points by building a regular graph of normal cones connected by the analyzed adjacencies. 

The utilization of normal cones has proved useful in the analysis of the polyhedral structure and particularly of extreme points for various models. A similar complete analysis has been conducted for the case of probability intervals in \cite{skulj2022normal}. In future, similar analysis could be applied to variety of models, such as generalized $p$-boxes, $p$-boxes in bivariate or multivariate settings, and also to the study of imprecise copulas. 

\section*{Acknowledgement}
The author acknowledges the financial support from the Slovenian Research Agency (research core funding No. P5-0168).

\bibliography{references_all}
\end{document}